\newtheorem*{theorem*}{Theorem A}
\newtheorem*{theorem**}{Theorem B}
\newtheorem{theorem}{Theorem}[section]
\newtheorem{corollary}[theorem]{Corollary}
\newtheorem*{corollary*}{Corollary}
\newtheorem{lemma}[theorem]{Lemma}
\newtheorem*{lemma*}{Lemma}
\newtheorem{proposition}[theorem]{Proposition}
\newtheorem*{proposition*}{Proposition}
\theoremstyle{remark}
\newtheorem{remark}[theorem]{Remark}
\newtheorem*{remark*}{Remark}
\newtheorem*{example*}{Example}
\newtheorem*{observation*}{Observation}
\theoremstyle{definition}
\newtheorem{definition}[theorem]{Definition}
\newtheorem*{definition*}{Definition}
\newtheorem*{conjecture*}{Conjecture}
\numberwithin{equation}{section}
\begin{document}
\title[Models for chain homotopy category of relative acyclic complexes]{Models for chain homotopy category of relative acyclic complexes}
\author[Jiangsheng Hu, Wei Ren, Xiaoyan Yang, Hanyang You] {Jiangsheng Hu, Wei Ren, Xiaoyan Yang, Hanyang You$^{\dag}$}

\thanks{}
\subjclass[2020]{18G25, 18N40, 18E10, 18G35, 18G80}
\date{\today}

\thanks{${}^{\dag}$ Corresponding author:  youhanyang@hznu.edu.cn }
\keywords{Balanced pair, model category, exact category; chain homotopy category; recollement}%

\maketitle

\dedicatory{}%
\commby{}%
\begin{abstract}
Let $(\mathcal{X}, \mathcal{Y})$ be a balanced pair in an abelian category $\mathcal{A}$.
Denote by ${\bf K}_{\mathcal{E}\text{-}{\rm ac}}(\mathcal{X})$ the chain homotopy category of right $\mathcal{X}$-acyclic complexes with all items in $\mathcal{X}$, and dually by ${\bf K}_{\mathcal{E}\text{-}{\rm ac}}(\mathcal{Y})$ the chain homotopy category of left $\mathcal{Y}$-acyclic complexes with all items in $\mathcal{Y}$. We establish realizations of ${\bf K}_{\mathcal{E}\text{-}{\rm ac}}(\mathcal{X})$ and ${\bf K}_{\mathcal{E}\text{-}{\rm ac}}(\mathcal{Y})$ as homotopy categories of model categories under mild conditions. Consequently, we obtain relative versions of recollements of Krause and Neeman-Murfet. We further give applications to Gorenstein projective and Gorenstein injective modules.

\end{abstract}

\section{Introduction}

A model structure on a category $\mathcal{C}$ is a triple of three classes of morphisms, called cofibrations, fibrations, and weak equivalences, satisfying a few axioms; see \cite{Hov99,Qui67} for details. When $\mathcal{C}$ is an additive category equipped with a model structure, its homotopy category in the sense of Quillen (i.e., the localization of $\mathcal{C}$ with respect to weak equivalences) is a pretriangulated category in the sense of \cite{BR07}. If $\mathcal{C}$ is weakly idempotent complete, then the homotopy category of an exact model structure carries a triangulated structure (see \cite[Section 6]{Gil25}). Consequently, a Quillen equivalence between such model categories yields a triangle equivalence between their homotopy categories.

The Hovey correspondence \cite{Hov99,Hov02} of abelian categories is an effective tool for constructing model structures on abelian categories. It is inspired by the somewhat canonical model structure on a
Frobenius category, but with two cotorsion pairs mimicking the role played by the projectives and the injectives. Furthermore, Hovey's correspondence has been extended
as the one-one correspondence between exact model structures and the Hovey triples on weakly idempotent
complete exact categories, by Gillespie \cite{Gil11} (see also  \v{S}\v{t}ov\'{\i}\v{c}ek \cite{S14-2}).

An important application of model category theory lies in providing systematic methods for constructing recollements of triangulated categories (see \cite{Bec14,GC,Gil08,Gil16-2,HZZ25}). Recall that the notion of a recollement, introduced by Beilinson, Bernstein, and Deligne in \cite{BBD82}, can be viewed as a form of ``short exact sequence" of triangulated categories, in which the functors involved admit both left and right adjoints. For example, Becker \cite{Bec14} recovered Krause's recollement ${\bf K}_{\rm ac}(\mathcal{I})\longrightarrow {\bf K}(\mathcal{I})\longrightarrow {\bf D}(R)$ from \cite{H05}, and Gillespie \cite{Gil16-2} recovered Neeman-Murfet's recollement ${\bf K}_{\rm ac}(\mathcal{P})\longrightarrow {\bf K}(\mathcal{P})\longrightarrow {\bf D}(R)$ from \cite{Mur07} using the theory of abelian model categories. Here, ${\bf K}(\mathcal{I})$ (resp. ${\bf K}(\mathcal{P})$) denotes the chain homotopy category of all complexes of injective (resp. projective) modules, ${\bf K}_{\rm ac}(\mathcal{I})$
(resp. ${\bf K}_{\rm ac}(\mathcal{P})$) is the full subcategory of exact complexes of injective (resp. projective) modules, and ${\bf D}(R)$ is the derived category of a ring $R$.

Recall that a pair $(\mathcal{X}, \mathcal{Y})$ of additive subcategories in an abelian category $\mathcal{A}$ is said to be balanced if every object of $\mathcal{A}$ admits an $\mathcal{X}$-resolution that remains acyclic after applying $\operatorname{Hom}_{\mathcal{A}}(-,Y)$ for all $Y \in \mathcal{Y}$, and also admits a $\mathcal{Y}$-coresolution that is acyclic after applying $\operatorname{Hom}_{\mathcal{A}}(X,-)$ for all $X \in \mathcal{X}$. This condition implies a balancing phenomenon: the relative right-derived functors of $\operatorname{Hom}_\mathcal{A}(-,-)$ can be computed either via an $\mathcal{X}$-resolution of the first variable, or equivalently via a $\mathcal{Y}$-coresolution of the second variable. In other words, the Hom functor is right-balanced by $\mathcal{X} \times \mathcal{Y}$; see \cite[\S 8.2]{EJ00}. It is straightforward to verify that $(\mathcal{P},\mathcal{I})$ is a balanced pair. We refer to \cite{Chen10,EJ00,EPZ20} for more examples of balanced pairs.

Let $\mathcal{A}$ be an abelian category equipped with a balanced pair $(\mathcal{X}, \mathcal{Y})$. Denote by ${\bf K}(\mathcal{X})$ (resp. ${\bf K}(\mathcal{Y})$) the chain homotopy category of complexes with all items in $\mathcal{X}$ (resp. $\mathcal{Y}$), and by ${\bf K}_{\mathcal{E}\text{-}{\rm ac}}(\mathcal{X})$ (resp. ${\bf K}_{\mathcal{E}\text{-}{\rm ac}}(\mathcal{Y})$) the full subcategory of ${\bf K}(\mathcal{X})$ (resp. ${\bf K}(\mathcal{Y})$) consisting of complexes that are acyclic with respect to the functor ${\rm Hom}_\mathcal{A}(\mathcal{X},-)$ (resp. ${\rm Hom}_\mathcal{A}(-,\mathcal{Y})$). One then considers the following sequences of triangulated categories:
$${\bf K}_{\mathcal{E}\text{-}{\rm ac}}(\mathcal{X})\longrightarrow {\bf K}(\mathcal{X})\longrightarrow {\bf D}_\mathcal{X}(\mathcal{A})~\textrm{and}~{\bf K}_{\mathcal{E}\text{-}{\rm ac}}(\mathcal{Y})\longrightarrow {\bf K}(\mathcal{Y})\longrightarrow {\bf D}_\mathcal{Y}(\mathcal{A}),$$
where ${\bf D}_\mathcal{X}(\mathcal{A})$ (resp. ${\bf D}_\mathcal{Y}(\mathcal{A})$) is the relative derived category in the sense of \cite[Definition 3.1]{Chen10} (see also \cite{EJ00,LH15,SWSW08}). We have proved in \cite{HRYY25} that the chain homotopy categories ${\bf K}(\mathcal{X})$ and ${\bf K}(\mathcal{Y})$, and the relative derived categories ${\bf D}_\mathcal{X}(\mathcal{A})$ and ${\bf D}_\mathcal{Y}(\mathcal{A})$ can be realized as homotopy categories of model categories under certain conditions. This naturally leads us to seek realizations of ${\bf K}_{\mathcal{E}\text{-}{\rm ac}}(\mathcal{X})$ and ${\bf K}_{\mathcal{E}\text{-}{\rm ac}}(\mathcal{Y})$ as homotopy categories of suitable model categories, thereby obtaining relative versions of the Krause's and Neeman-Murfet's recollements.

We now outline the results of the paper. In Section \ref{preli}, we summarize some preliminaries and
basic facts which will be used throughout the paper.

In Section \ref{section3}, we realize the chain homotopy categories of complexes ${\bf K}_{\mathcal{E}\text{-}{\rm ac}}(\mathcal{X})$ and ${\bf K}_{\mathcal{E}\text{-}{\rm ac}}(\mathcal{Y})$ as homotopy categories of certain model categories. For the given balanced pair $(\mathcal{X}, \mathcal{Y})$, we denote by $\mathcal{E}$ the class of short exact sequences in $\mathcal{A}$ which remain exact by applying ${\rm Hom}_{\mathcal{A}}(X, -)$ for any $X\in \mathcal{X}$. It follows that $(\mathcal{A}, \mathcal{E})$ is an exact category.  Therefore, the category ${\rm Ch}(\mathcal{A}, \mathcal{E})$ of complexes over $(\mathcal{A}, \mathcal{E})$ with respect to the class ${\rm Ch}(\mathcal{E})$ of short exact sequences of complexes which are in $\mathcal{E}$ in each degree, is also an exact category (see \cite[Lemma 9.1]{Buh10}). By the Hovey correspondence between exact model structures and the Hovey triples on weakly idempotent
complete exact categories (see \cite{Gil11,S14-2}), we will denote the model structure $\mathcal{M}$ by the corresponding Hovey triples, and denote the homotopy cateogy of model categories by ${\rm Ho}(\mathcal{M})$. Under the assumption that $(\mathcal{E}\text{-}{\rm dw}\widetilde{\mathcal{X}})^{\perp}$ is closed under direct sums, we establish a hereditary model structure $\mathcal{M}_{ac\mathcal{X}} = (\mathcal{E}\text{-}{\rm ac}\widetilde{\mathcal{X}}, (\mathcal{E}\text{-}{\rm ac}\widetilde{\mathcal{X}})^{\perp}, {\rm Ch}(\mathcal{A}, \mathcal{E}))$ on the exact category ${\rm Ch}(\mathcal{A}, \mathcal{E})$ with a triangle equivalence ${\rm Ho}(\mathcal{M}_{ac\mathcal{X}}) \simeq {\bf K}_{\mathcal{E}\text{-}{\rm ac}}(\mathcal{X})$ (see Theorem \ref{thm:M4Sig}).

Dually, if ${}^{\perp}(\mathcal{E}\text{-}{\rm dw}\widetilde{\mathcal{Y}})$ is closed under direct products, then we have a hereditary model structure $\mathcal{M}_{ac\mathcal{Y}} = ({\rm Ch}(\mathcal{A}), {}^{\perp}(\mathcal{E}\text{-}{\rm ac}\widetilde{\mathcal{Y}}), \mathcal{E}\text{-}{\rm ac}\widetilde{\mathcal{Y}})$ on ${\rm Ch}(\mathcal{A}, \mathcal{E})$ with a triangle equivalence ${\rm Ho}(\mathcal{M}_{ac\mathcal{Y}}) \simeq {\bf K}_{\mathcal{E}\text{-}{\rm ac}}(\mathcal{Y})$ (see Remark \ref{rem:Sig}). In the specific case of $(\mathcal{X}, \mathcal{Y}) = (\mathcal{P}, \mathcal{I})$,  ${\bf K}_{\mathcal{E}\text{-}{\rm ac}}(\mathcal{Y})$ (resp. ${\bf K}_{\mathcal{E}\text{-}{\rm ac}}(\mathcal{X})$) is exactly the injective (resp. projective) stable derived category which have been studied by Gillespie in \cite{Gil16-2} and Krause in \cite{H05}.

In Section \ref{applications}, we obtain relative versions of Krause's and Neeman-Murfet's recollements. This is based on the models for ${\bf K}_{\mathcal{E}\text{-}{\rm ac}}(\mathcal{X})$ and ${\bf K}_{\mathcal{E}\text{-}{\rm ac}}(\mathcal{Y})$ in Section \ref{section3} and the models for ${\bf K}(\mathcal{X})$, ${\bf K}(\mathcal{Y})$, ${\bf D}_\mathcal{X}(\mathcal{A})$ and ${\bf D}_\mathcal{Y}(\mathcal{A})$ in \cite{HRYY25}. It is proved in Corollary \ref{cor:recollement} that if $(\mathcal{E}\text{-}{\rm dw}\widetilde{\mathcal{X}})^{\perp}$ is closed under direct sums, then there is a recollement:
$$\xymatrix@!=4pc{ {\bf K}_{\mathcal{E}\text{-}ac}(\mathcal{X}) \ar[r] & {\bf K}(\mathcal{X}) \ar@<-2ex>[l]
\ar@<+2ex>[l] \ar[r] & {\bf D}_\mathcal{X}(\mathcal{A}).
\ar@<-2ex>[l] \ar@<+2ex>[l]}$$
Dually, if $^{\perp}(\mathcal{E}\text{-}{\rm dw}\widetilde{\mathcal{Y}})$ is closed under direct products, then it is shown in Corollary  \ref{cor:recollement2} that there is a recollement:
$$\xymatrix@!=4pc{ {\bf K}_{\mathcal{E}\text{-}ac}(\mathcal{Y}) \ar[r] & {\bf K}(\mathcal{Y}) \ar@<-2ex>[l]
\ar@<+2ex>[l] \ar[r] & {\bf D}_\mathcal{Y}(\mathcal{A}).
\ar@<-2ex>[l] \ar@<+2ex>[l]}$$
These above reollements generalize the Krause's recollement in \cite[Theorem 7.7]{S14} and the Neeman-Murfet's recollement in \cite{Mur07,Nee08}. Denote by $\mathcal{GP}$ (resp. $\mathcal{GI}$) the subcategory which consisting of all Gorenstein projective (resp. injective) modules over a ring $R$. Let $R$ be a ring with finite Gorenstein weak dimension. It follows from \cite[Theorem 4.2]{WE24} and \cite[Lemma 5.7]{HRYY25} that $(\mathcal{GP},\mathcal{GI})$ is a balanced pair such that $(\mathcal{E}\text{-}{\rm dw}\widetilde{\mathcal{GP}})^{\perp}$ $=$ $^{\perp}(\mathcal{E}\text{-}{\rm dw}\widetilde{\mathcal{GI}})$ is closed under direct sums and direct products. In combination with this, we obtain recollements ${\bf K}_{\mathcal{E}\text{-}ac}(\mathcal{GP})\longrightarrow {\bf K}(\mathcal{GP})\longrightarrow {\bf D}_\mathcal{GP}(R)$ and ${\bf K}_{\mathcal{E}\text{-}ac}(\mathcal{GI})\longrightarrow {\bf K}(\mathcal{GI})\longrightarrow {\bf D}_\mathcal{GI}(R)$ (see Corollary \ref{cor:recollement3}). Here ${\bf D}_\mathcal{GP}(R)= {\bf D}_\mathcal{GI}(R)$ are called Gorenstein derived categories by Gao and Zhang in \cite{GZ10} (see Remark \ref{rmk:relativederivedcategory}). The principal technique we employ comes from the work of Becker \cite{Bec14} and Gillespie \cite{Gil08,Gil16-2}. They provided a method to construct recollement from three interrelated hereditary Hovey triples.

\section{Preliminaries}\label{preli}

Let $\mathcal{A}$ be a complete and cocomplete abelian category. A class of objects in $\mathcal{A}$ will be always assumed to be closed under isomorphisms and under finite direct sums. An {\em exact category} is a pair $(\mathcal{A}, \mathcal{E})$ where $\mathcal{E}$ is a class of  ``short exact sequences'' in $\mathcal{A}$, i.e. kernel-cokernel pairs $(i, p)$ depicted by $A'\stackrel{i}\rightarrowtail A\stackrel{p}\twoheadrightarrow A''$, satisfying some axioms; see Quillen's original definition in \cite{Qui73}. A map such as $i$ is called an {\em admissible monomorphism} while $p$ is called an {\em admissible epimorphism}. Recall that an exact category $(\mathcal{A}, \mathcal{E})$ is {\em weakly idempotent complete} if every split monomorphism has a cokernel and every split epimorphism has a kernel; see \cite[Definition 2.2]{Gil11} or \cite[Definition 7.2]{Buh10}. We refer to a readable exposition \cite{Buh10} for details on exact categories.

\subsection*{Cotorsion pairs}

In analogy to abelian categories, the axioms of exact categories allow for the usual construction of the Yoneda Ext bifunctor ${\rm Ext}^1_{\mathcal{E}}(M, N)$. It is the abelian group of equivalence classes of short exact sequences $N\rightarrowtail L\twoheadrightarrow M$. In particular, we get that ${\rm Ext}^1_{\mathcal{E}}(M, N) = 0$ if and only if every short exact sequence $N\rightarrowtail L\twoheadrightarrow M$ is isomorphic to the split exact sequence $N\rightarrowtail N\oplus M\twoheadrightarrow M$.

The definition of a cotorsion pair readily generalizes to exact categories; see \cite[Definition 2.1]{Gil11}. Specifically, a pair of classes $(\mathcal{F}, \mathcal{C})$ in $(\mathcal{A}, \mathcal{E})$ is a \emph{cotorsion pair} provided that $\mathcal{F} =  {^\perp}\mathcal{C}$ and $\mathcal{C} = \mathcal{F}^{\perp}$, where the left orthogonal class $^{\perp}\mathcal{C}$ consists of $F$ such that $\mathrm{Ext}^1_{\mathcal{E}}(F, X) = 0$ for all $X\in \mathcal{C}$, and the right orthogonal class $\mathcal{F}^{\perp}$ is defined similarly. We say the cotorsion pair $(\mathcal{F}, \mathcal{C})$ is {\em hereditary} if $\mathcal{F}$ is closed under taking kernels of admissible epimorphisms between objects of $\mathcal{F}$, and if $\mathcal{C}$ is closed under taking cokernels of admissible monomorphisms between objects of $\mathcal{C}$.

The cotorsion pair $(\mathcal{F}, \mathcal{C})$ is said to be {\em complete} if for any object $M\in \mathcal{A}$, there exist short exact sequences $C\rightarrowtail F \twoheadrightarrow M$ and $M\rightarrowtail C' \twoheadrightarrow F'$ with $F, F'\in \mathcal{F}$ and $C, C'\in \mathcal{C}$. In this case, $F \twoheadrightarrow M$ is called a \emph{special right $\mathcal{F}$-approximation} (or, \emph{special $\mathcal{F}$-precover}) of $M$, and $M\rightarrowtail C'$ is called  a \emph{special left $\mathcal{C}$-approximation} (or, \emph{special $\mathcal{C}$-preenvelope}) of $M$.

\subsection*{Approximation and balanced pairs}

Let $\mathcal{X}$ be a subcategory of the abelian category $\mathcal{A}$ and $M$ an object in $\mathcal{A}$. A morphism $f: X\rightarrow M$ (resp. $f: M\rightarrow X$) with $X\in \mathcal{X}$ is called a {\em right $\mathcal{X}$-approximation} (resp. {\em left $\mathcal{X}$-approximation}) of $M$, if any morphism from an object in $\mathcal{X}$ to $M$ (resp. $M$ to $\mathcal{X}$) factors through $f$. The subcategory $\mathcal{X}$ is called {\em contravariantly finite} (resp. {\em covariantly finite}) if each object in $\mathcal{A}$ has a right $\mathcal{X}$-approximation (resp. {\em left $\mathcal{X}$-approximation}).

Recall that a complex is  {\em right $\mathcal{X}$-acyclic} (resp. {\em left $\mathcal{Y}$-acyclic}) if it remains acyclic after applying ${\rm Hom}_\mathcal{A}(X, -)$ for all $X\in \mathcal{X}$ (resp. ${\rm Hom}_\mathcal{A}(-, Y)$ for all $Y\in \mathcal{Y}$).

A pair $(\mathcal{X}, \mathcal{Y})$ of subcategory in an abelian category $\mathcal{A}$ is called a {\em balanced pair} if the following conditions are satisfied (see \cite[Definition 1.1]{Chen10}):
\begin{enumerate}
\item the subcategory $\mathcal{X}$ is contravariantly finite and $\mathcal{Y}$ is covariantly finite;
\item for each object $M\in \mathcal{A}$, there is a complex $\cdots \rightarrow X_1\rightarrow X_0\rightarrow M\rightarrow 0$ with each $X_i\in \mathcal{X}$ which is both right $\mathcal{X}$-acyclic and left $\mathcal{Y}$-acyclic;
\item for each object $N\in \mathcal{A}$, there is a complex $0 \rightarrow M\rightarrow Y_0\rightarrow Y_1\rightarrow \cdots$ with each $Y_i\in \mathcal{Y}$ which is both right $\mathcal{X}$-acyclic and left $\mathcal{Y}$-acyclic;
\end{enumerate}

The balanced pair is called {\em admissible} if each right $\mathcal{X}$-approximation is an epimorphism and each left $\mathcal{Y}$-approximation is a monomorphism.
It follows from \cite[Proposition 2.6]{Chen10} that if there exist two complete and hereditary cotorsion pairs $(\mathcal{X}, \mathcal{Z})$ and $(\mathcal{Z}, \mathcal{Y})$ in $\mathcal{A}$, then the pair $(\mathcal{X}, \mathcal{Y})$ is an admissible balanced pair. In this case, $(\mathcal{X}, \mathcal{Z}, \mathcal{Y})$ is called a {\em cotorsion triple}. It follows from \cite[Theorem 4.4]{EPZ20} that the existence of complete and hereditary cotorsion triple in $\mathcal{A}$ is equivalent to that $\mathcal{A}$ has enough projective objects and injective objects.

\subsection*{Relative derived categories} 

Let $\mathcal{X}$ be a contravariantly finite subcategory of an abelian category $\mathcal{A}$. Denote by ${\bf K}(\mathcal{A})$ be the homotopy category of $\mathcal{A}$ and $\widetilde{\mathcal{E}}$ the subcategory of right $\mathcal{X}$-acyclic complexes, we recall the {\em relative derived category}  ${\bf D}_\mathcal{X}(\mathcal{A})$ of $\mathcal{A}$ with respect to $\mathcal{X}$ (see \cite[Definition 3.1]{Chen10}) is defined to be the Verdier quotient of ${\bf K}(\mathcal{A})$ modulo the subcategory consisting of objects in $\widetilde{\mathcal{E}}$, that is,
$${\bf D}_\mathcal{X}(\mathcal{A}):= {\bf K}(\mathcal{A})/ \widetilde{\mathcal{E}}.$$

\begin{remark}\label{rmk:relativederivedcategory}
Note that the derived category of exact category in the sense of \cite[Construction 1.5]{Nee90} is an example of relative derived category. In particular, if $\mathcal{X}$ is the full subcategory of Gorenstein projective objects in the sense of Enochs and Jenda in \cite{EJ00}, ${\bf D}_\mathcal{X}(\mathcal{A})$ is the Gorenstein derived category in the sense of Gao and Zhang in \cite{GZ10}.

\end{remark}

Dually, for a covariantly finite subcategory $\mathcal{Y}$, one can define the relative derived category ${\bf D}_\mathcal{Y}(\mathcal{A})$ of $\mathcal{A}$ with respect to $\mathcal{Y}$. Under the assumption that $(\mathcal{X}, \mathcal{Y})$ is a balanced pair, it follows from \cite[Proposition 2.2]{Chen10} that $\widetilde{\mathcal{E}}$ is exactly the complexes which is left $\mathcal{Y}$-acyclic, thus ${\bf D}_\mathcal{Y}(\mathcal{A})$ coincides with ${\bf D}_\mathcal{X}(\mathcal{A})$. Moreover, we have realized it as a homotopy category of a model structure, see \cite[Theorem 3.10]{HRYY25} for
details.

\subsection*{Hovey triples and model structures}

The notion of model structure is introduced by Quillen \cite{Qui67}, which refers to three specified classes of morphisms, called fibrations, cofibrations and weak equivalences,satisfing a few axions; see \cite{Qui67,Hov99} for details. A {\em model category} is a complete and cocomplete category equipped with a model structure.

Now suppose the exact category $(\mathcal{A}, \mathcal{E})$ has a model structure. An object $M\in \mathcal{A}$ is called {\em trivial} (resp. {\em cofibrant}, {\em fibrant}) if $0\rightarrowtail M$ (resp. $0\rightarrowtail M$, $M\twoheadrightarrow 0$) is a weak equivalence(resp. cofibration, fibration). We say $M$ is trivially cofibrant (resp. trivially fibrant) if it is both trivial and cofibrant (resp. fibrant). The subcategories of trivial, cofibrant and fibrant objects will be denoted by $\mathcal{A}_{tri}$, $\mathcal{A}_{c}$ and $\mathcal{A}_{f}$, respectively.

Recall that a {\em thick subcategory} means a class $\mathcal{W}$ of objects which is closed under direct summands, and such that if two out of three of the terms in a short exact sequence are in $\mathcal{W}$, then so is the third; see e.g. \cite[Definition 3.2]{Gil11}. Recall that a triple $(\mathcal{C}, \mathcal{W}, \mathcal{F})$ of subcategories in $(\mathcal{A}, \mathcal{E})$ is called a {\em (hereditary) Hovey triple}, if $\mathcal{W}$ is thick and both $(\mathcal{C}, \mathcal{W}\cap \mathcal{F})$ and $(\mathcal{C}\cap \mathcal{W}, \mathcal{F})$ are complete (hereditary) cotorsion pairs. It is well known that there is a correspondence between Hovey triples and model structures stated as follow:

\begin{lemma}\cite[Theorem 3.3]{Gil11}\label{lem:Gil3.3}
If the exact category $(\mathcal{A}, \mathcal{E})$ has a model structure admits a model structure, then the triple $(\mathcal{A}_{c}, \mathcal{A}_{tri}, \mathcal{A}_{f})$ of subcategories becomes a Hovey triple. If  $(\mathcal{A}, \mathcal{E})$ is weakly idempotent complete, then the converse holds. In this case, a map is a (trivial) cofibration if and only if it is an admissible monomorphism with a (trivially) cofibrant cokernel, and a map is a (trivial) fibration if and only if it is an admissible epimorphism with a (trivially) fibrant kernel. A map is weak equivalence if and only if it factors as a trivial cofibration followed by a trivial fibration.
\end{lemma}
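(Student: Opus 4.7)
The plan is to establish the correspondence in two directions, following the strategy of Hovey's original theorem for abelian categories, with the modifications developed by Gillespie for the exact setting. For the forward implication, assume $(\mathcal{A}, \mathcal{E})$ has a model structure. I would first verify that $\mathcal{W} := \mathcal{A}_{tri}$ is thick: closure under direct summands is immediate from the retract-closure of weak equivalences, since $0 \to M$ is a retract of $0 \to M \oplus N$. For the two-out-of-three property on a short exact sequence $A \rightarrowtail B \twoheadrightarrow C$, I would first establish the auxiliary fact that if $A$ is trivial then $B \to C$ is a weak equivalence, and dually if $C$ is trivial then $A \to B$ is a weak equivalence; these follow by comparing appropriate factorizations and using the pushout/pullback structure in $\mathcal{E}$. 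Combined with the usual two-out-of-three for composable maps, this delivers thickness. To extract the two cotorsion pairs I would verify Ext-orthogonality via the lifting axioms: given an extension $T \rightarrowtail E \twoheadrightarrow C$ with $C$ cofibrant and $T$ trivially fibrant, lifting the trivial fibration $T \twoheadrightarrow 0$ against the cofibration $0 \rightarrowtail C$ in the square obtained from the extension produces a retraction of $T \rightarrowtail E$, whence $\mathrm{Ext}^{1}_{\mathcal{E}}(C, T) = 0$. Completeness of the cotorsion pairs is then read off from the factorization axiom applied to $0 \to M$ and $M \to 0$, which delivers the four required special approximation sequences.

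For the converse, assume $(\mathcal{A}, \mathcal{E})$ is weakly idempotent complete and a Hovey triple $(\mathcal{C}, \mathcal{W}, \mathcal{F})$ is given. I would define (trivial) cofibrations, (trivial) fibrations and weak equivalences by the formulas in the statement and verify Quillen's axioms one at a time. The factorization axioms follow immediately from completeness of the two cotorsion pairs $(\mathcal{C}, \mathcal{W} \cap \mathcal{F})$ and $(\mathcal{C} \cap \mathcal{W}, \mathcal{F})$: for any $f: X \to Y$, applying the special approximations to a suitable auxiliary object (for instance a pullback or pushout built from $f$) produces the required factorizations. The lifting axioms translate directly into the Ext-vanishing conditions of the two cotorsion pairs, using that a square with admissible monomorphism on the left and admissible epimorphism on the right admits a lift iff the corresponding Ext class vanishes; here the weakly-idempotent-complete hypothesis is needed so that the relevant pullbacks and pushouts remain inside $\mathcal{E}$. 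Retract-closure for each of the three classes is standard once one uses that admissible monomorphisms and epimorphisms, as well as each of $\mathcal{C}, \mathcal{W}, \mathcal{F}$, are closed under retracts.

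The main obstacle will be the two-out-of-three axiom for weak equivalences under the definition ``factors as a trivial cofibration followed by a trivial fibration''. This property is not visibly closed under composition nor obviously independent of the chosen factorization. I would overcome this by first proving the intrinsic characterization that $f$ is a weak equivalence iff $f$ factors through an object of $\mathcal{W}$, equivalently iff in some (hence every) factorization $f = p \circ i$ with $i$ an admissible monomorphism with cokernel in $\mathcal{C}$ and $p$ an admissible epimorphism with kernel in $\mathcal{F}$, that cokernel lies in $\mathcal{C} \cap \mathcal{W}$ and that kernel lies in $\mathcal{W} \cap \mathcal{F}$. Once this characterization is in place, two-out-of-three reduces to the thickness of $\mathcal{W}$ combined with diagram chases using pullbacks and pushouts that exist by weak idempotent completeness. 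The characterization of (trivial) cofibrations and fibrations in the last sentence of the statement is then built into the reverse-direction definitions, while in the forward direction it requires a short additional argument comparing any cofibration with a ``standard'' admissible monomorphism produced by the factorization axiom, and invoking the retract axiom to identify them.
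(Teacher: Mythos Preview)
The paper does not prove this lemma at all: it is stated as a citation of \cite[Theorem 3.3]{Gil11} and used as a black box, so there is no argument in the paper to compare your proposal against. Your outline is a faithful sketch of how Gillespie's original proof proceeds (thickness of $\mathcal{W}$ from two-out-of-three for weak equivalences, Ext-orthogonality from the lifting axioms, completeness from factorization, and in the converse direction the identification of weak equivalences as the delicate step requiring weak idempotent completeness), so as a reconstruction of the cited result it is reasonable; but for the purposes of this paper no proof is expected or given.
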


Throughout this paper, we always denote a model structure by its corresponding Hovey triple $(\mathcal{A}_{c}, \mathcal{A}_{tri}, \mathcal{A}_{f})$.

Let $\mathcal{A}$ be a model category with a hereditary (that is, its corresponding Hovey triple is hereditary) model structure $\mathcal{M} = (\mathcal{A}_{c}, \mathcal{A}_{tri}, \mathcal{A}_{f})$. Its {\em homotopy category}, denote by $\mathrm{Ho}(\mathcal{M})$, is the localization of $\mathcal{A}$ with respect to the collection of weak equivalences. It is well known that $\mathcal{A}_{cf} = \mathcal{A}_{c}\cap\mathcal{A}_{f}$ is a Frobenius category, with $\omega = \mathcal{A}_{c}\cap\mathcal{A}_{tri}\cap\mathcal{A}_{f}$ being the class of projective-injective objects. Then the stable category $\underline{\mathcal{A}_{cf}} = \mathcal{A}_{cf}/\omega$ is a triangulated category. In this case one has a triangle equivalence $\mathrm{Ho}(\mathcal{A})\simeq \underline{\mathcal{A}_{cf}}$; see e.g. \cite[Theorem 1.3]{GLZ24}, \cite[Theorem 1.2.10]{Hov99}, \cite[Proposition 4.4]{Gil11} or \cite[Proposition 1.1.13]{Bec14}.

\subsection*{Exact category of complexes}
For a complex $\cdots\rightarrow C_{n+1}\stackrel{d_{n+1}}\rightarrow C_n\stackrel{d_n}\rightarrow C_{n-1}\rightarrow \cdots $ we denote ${\rm Ker}d_n$ by ${\rm Z}_nC$, ${\rm Im}d_{n+1}$ by ${\rm B}_nC$ and the $n$th homology ${\rm Z}_nC/{\rm B}_nC$ by ${\rm H}_nC$. For an object $A\in \mathcal{A}$, denote by ${\rm S}^nA$ the complex with $A$ in degree $n$ and all other entries 0, and ${\rm D}^nA$ the complex with $A$ in degree $n$ and $n-1$ and all other entries 0, with all maps 0 except $d_n = 1_A$. We refer to \cite[Lemma 3.1]{Gil04} and \cite[Lemma 4.2]{Gil11} for some useful isomorphisms with respect to complexes of the form ${\rm S}^nA$ and ${\rm D}^nA$. The suspension functor over complexes is denoted by $\Sigma$.

Given two complexes $C$ and $D$ and a chain map $f: C\rightarrow D$, denote by ${\rm Con}(f)$ the {\em mapping cone} of $f$. Recall that $f$ is {\em null homotopic}, denoted by $f\sim 0$, if there are maps $s_n: C_n\rightarrow D_{n+1}$ such that $f_n = d^D_{n+1}s_n + s_{n-1}d^C_n$. Chain maps $f, g: C\rightarrow D$ are called {\em chain homotopic}, denoted by $f\sim g$ if $f-g \sim 0$. In this sense $\{s_n\}$ are called a {\em chain homotopy}.

The {\em Hom-complex} ${\rm Hom}_{\mathcal{A}}(C, D)$ is defined with $n$th component
${\rm Hom}_{\mathcal{A}}(C, D)_{n} = \prod_{k\in \mathbb{Z}}{\rm Hom}_{\mathcal{A}}(X_{k}, Y_{k+n})$ and differential $(\delta_{n}f)_{k} = d_{k+n}^{D}f_{k} - (-1)^{n}f_{k-1}d_{k}^{C}$ for morphisms $f_{k}: C_{k}\rightarrow D_{k+n}$.

Let ${\rm Ch}(\mathcal{A}, \mathcal{E})$ be the exact category of chain complexes with respect to the class ${\rm Ch}(\mathcal{E})$ of short exact sequences of complexes which are in $\mathcal{E}$ degreewise.
Denote by ${\rm Ext}_{{\rm Ch}(\mathcal{A})}^1(C, D)$ the group of equivalence classes of short exact sequences $0\rightarrow D\rightarrow E\rightarrow C\rightarrow 0$ of complexes. Let ${\rm Ext}_{dw}^1(C, D)$ and ${\rm Ext}_{{\rm Ch}(\mathcal{E})}^1(C, D)$ be the subgroups of ${\rm Ext}_{{\rm Ch}(\mathcal{A})}^1(C, D)$ consisting of those short exact sequences which are in each degree split, and in $\mathcal{E}$ respectively. The following is well known; see e.g. \cite[Lemma 2.1]{Gil04}.

\begin{lemma}\label{lem:Gil04}
For chain complexes $C$ and $D$, one has
$${\rm Ext}_{dw}^1(C, \Sigma^{-n-1}D) \cong {\rm H}_n{\rm Hom}_\mathcal{A}(C, D) = {\rm Hom}_{{\rm Ch}(\mathcal{A})}(C, \Sigma^{-n}D)/\sim.$$
\end{lemma}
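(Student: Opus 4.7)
The plan is to establish the two asserted isomorphisms separately. The second one, namely
$${\rm H}_n {\rm Hom}_{\mathcal{A}}(C,D) = {\rm Hom}_{{\rm Ch}(\mathcal{A})}(C,\Sigma^{-n}D)/\sim,$$
is purely definitional: a degree $n$ cocycle in ${\rm Hom}_{\mathcal{A}}(C,D)$ is a family $\{f_k\colon C_k \to D_{k+n}\}$ satisfying $\delta_n f = 0$, which after unwinding with the sign convention given in the definition of $\delta_n$ is exactly the condition that $f$ be a chain map $C \to \Sigma^{-n}D$. Analogously, a boundary $\delta_{n+1} s$ records precisely a null-homotopy of chain maps $C \to \Sigma^{-n}D$. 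I would just state this and note the sign bookkeeping rather than write it out.

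For the substantive isomorphism
$$\Phi\colon {\rm H}_n{\rm Hom}_{\mathcal{A}}(C,D) \xrightarrow{\;\cong\;} {\rm Ext}_{dw}^1(C,\Sigma^{-n-1}D),$$
I would construct $\Phi$ explicitly. Given a cocycle $\phi \in {\rm Z}_n{\rm Hom}_{\mathcal{A}}(C,D)$, i.e.\ a family $\phi_k\colon C_k \to D_{k+n}$ with $d^D_{k+n}\phi_k = (-1)^n\phi_{k-1}d^C_k$, define a complex $E(\phi)$ with $E(\phi)_k = D_{k+n+1}\oplus C_k$ and differential
$$d^{E(\phi)}_k = \begin{pmatrix} (-1)^{n+1}d^D_{k+n+1} & \phi_k \\ 0 & d^C_k \end{pmatrix}.$$
The cocycle equation is exactly what forces $(d^{E(\phi)})^2 = 0$. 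Together with the obvious componentwise inclusion and projection, this yields a short exact sequence of complexes
$$0 \longrightarrow \Sigma^{-n-1}D \longrightarrow E(\phi) \longrightarrow C \longrightarrow 0$$
which splits in each degree, and so represents a class in ${\rm Ext}_{dw}^1(C,\Sigma^{-n-1}D)$.

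For the inverse, start from any class in ${\rm Ext}_{dw}^1(C,\Sigma^{-n-1}D)$; a choice of degreewise splitting identifies the middle term with $D_{k+n+1}\oplus C_k$ and forces the differential to be of the matrix form above for some family $\phi_k$, and $(d^E)^2 = 0$ forces $\phi \in {\rm Z}_n {\rm Hom}_{\mathcal{A}}(C,D)$. One then checks that $\Phi$ is well-defined and bijective by the standard change-of-splitting argument: two splittings of the same extension differ by a family $s_k \colon C_k \to D_{k+n+1}$, and a short computation shows that the two associated cocycles differ by $\delta_{n+1}s$; conversely, given $s$, the matrix $\bigl(\begin{smallmatrix} 1 & s\\ 0 & 1\end{smallmatrix}\bigr)$ is an equivalence of extensions between $E(\phi)$ and $E(\phi + \delta_{n+1}s)$. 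This simultaneously proves injectivity and surjectivity of $\Phi$.

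The only step requiring care is sign bookkeeping throughout: the suspension $\Sigma^{-n-1}D$ carries the differential $(-1)^{n+1}d^D$, and the formula for $\delta$ in the Hom-complex involves $(-1)^n$, so the chain-map condition for $\phi$, the equation $(d^E)^2 = 0$, and the identity $\phi-\psi = \delta_{n+1}s$ must all be matched with consistent signs. I expect this to be the only real obstacle; once the signs are lined up, the entire argument is formal and reduces, essentially, to the classical Baer sum description of extensions applied degreewise.
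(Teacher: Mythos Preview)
The paper does not prove this lemma at all; it is stated with the remark ``The following is well known; see e.g.\ \cite[Lemma 2.1]{Gil04}'' and no argument is given. Your proposal supplies exactly the standard proof of this classical fact --- identifying degree-$n$ cocycles/coboundaries in the Hom-complex with chain maps/null-homotopies $C\to\Sigma^{-n}D$, and then matching homotopy classes with degreewise-split extensions via the upper-triangular differential on $D_{k+n+1}\oplus C_k$ --- and it is correct. The only care needed, as you note, is the sign bookkeeping tying together the Hom-complex differential $\delta_n$ and the suspension convention; your computation that $(d^{E(\phi)})^2=0$ is equivalent to $\delta_n\phi=0$ checks out with the paper's convention for $\delta$.
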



\section{Models for relative acyclic complexes}\label{section3}

Throughout the paper, let $\mathcal{A}$ be a complete abelian category which satisfies {\em AB5} (i.e. direct limits are exact in $\mathcal{A}$), and let $(\mathcal{X}, \mathcal{Y})$ be an admissible balanced pair in $\mathcal{A}$.

Recall that a complex $C$ is {\em right $\mathcal{X}$-acyclic} if it remains acyclic by applying ${\rm Hom}_{\mathcal{A}}(X, -)$ for any $X\in \mathcal{X}$, and dually, one has the notion of {\em left $\mathcal{Y}$-acyclic}; see \cite[pp. 2721]{Chen10}. We begin with the following observation, which will lead to \cite[Proposition 2.2]{Chen10} by a different and more straightforward proof.

\begin{lemma}\label{lem:XY-ac}
Let $0\rightarrow A\rightarrow B\rightarrow C\rightarrow 0$ be a short exact sequence. Then it is right $\mathcal{X}$-acyclic if and only if it is left $\mathcal{Y}$-acyclic.
\end{lemma}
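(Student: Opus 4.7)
The plan is to prove the forward direction, right $\mathcal{X}$-acyclic $\Rightarrow$ left $\mathcal{Y}$-acyclic; the converse is symmetric, obtained by swapping pullbacks with pushouts and $\mathcal{X}$-resolutions with $\mathcal{Y}$-coresolutions. The nontrivial content is, given $Y\in\mathcal{Y}$ and a morphism $f:A\to Y$, to extend $f$ to a morphism $B\to Y$.

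First I would crucially invoke axiom (2) of a balanced pair applied to the object $C$: choose a complex $\cdots\to X_1\to X_0\to C\to 0$ with each $X_i\in\mathcal{X}$ that is both right $\mathcal{X}$-acyclic and left $\mathcal{Y}$-acyclic. Admissibility makes $X_0\to C$ an epimorphism; taking its kernel $K$ yields a short exact sequence
\[
0\to K\to X_0\to C\to 0
\]
which inherits both acyclicity properties from the ambient complex.

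Second, I would form the pullback $B':=B\times_C X_0$ and work with the two resulting short exact sequences $0\to A\to B'\to X_0\to 0$ and $0\to K\to B'\to B\to 0$. Right $\mathcal{X}$-acyclicity of the given sequence with test object $X_0$ lifts $X_0\to C$ to a section $s:X_0\to B$, which splits the first of these; hence $B'\cong A\oplus X_0$, and under this identification the embedding $K\hookrightarrow B'$ becomes $k\mapsto(-s(k),k)$, where $s|_K$ lands in $A$ because $K$ maps to $0$ in $C$. The second sequence therefore reads
\[
0\to K\xrightarrow{(-s|_K,\,\mathrm{inc})} A\oplus X_0\to B\to 0.
\]

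Third, extending $f$ to $B$ is the same as producing a map $A\oplus X_0\to Y$ that kills the image of $K$. Writing such a map as $(a,x)\mapsto f(a)+h(x)$ for $h:X_0\to Y$, the condition becomes $h|_K=f\circ s|_K$. Left $\mathcal{Y}$-acyclicity of the sequence $0\to K\to X_0\to C\to 0$ secured in the first step says exactly that $\operatorname{Hom}_\mathcal{A}(X_0,Y)\twoheadrightarrow\operatorname{Hom}_\mathcal{A}(K,Y)$, so the required $h$ exists.

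The main obstacle is the first step: one must not settle for an arbitrary right $\mathcal{X}$-approximation of $C$ (which would only guarantee right $\mathcal{X}$-acyclicity of the new short exact sequence) but must use axiom (2) of the balanced pair to simultaneously secure left $\mathcal{Y}$-acyclicity of the same short exact sequence. This is precisely the extra ingredient consumed in the third step; once the correct approximation is chosen, the remainder is a routine pullback-and-splitting diagram chase.
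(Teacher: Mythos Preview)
The paper does not actually supply a proof of this lemma: it is introduced as an ``observation'' that yields a more direct route to \cite[Proposition~2.2]{Chen10}, but no argument follows the statement. Your task is therefore not comparison but verification.

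Your argument is correct. The key moves---replacing the given sequence by a pullback along a right $\mathcal{X}$-approximation $X_0\twoheadrightarrow C$, exploiting right $\mathcal{X}$-acyclicity to split $0\to A\to B'\to X_0\to 0$, and then invoking the left $\mathcal{Y}$-acyclicity of $0\to K\to X_0\to C\to 0$ to extend along $K\hookrightarrow X_0$---are all sound. Your explicit identification $B'\cong A\oplus X_0$ with $K$ embedded via $k\mapsto(-s(k),k)$ is accurate, and the extension condition $h|_K=f\circ s|_K$ is exactly what surjectivity of $\operatorname{Hom}_{\mathcal{A}}(X_0,Y)\to\operatorname{Hom}_{\mathcal{A}}(K,Y)$ provides. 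The dual direction is indeed symmetric using axiom~(3) of the balanced pair and a pushout along $A\hookrightarrow Y_0$.

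You correctly flag the crucial point: one must take the $\mathcal{X}$-resolution of $C$ furnished by the balanced-pair axiom~(2), not merely an arbitrary right $\mathcal{X}$-approximation, because the left $\mathcal{Y}$-acyclicity of $0\to K\to X_0\to C\to 0$ is exactly what is consumed in the final step. One small presentational remark: it would be worth saying explicitly that the resulting map $A\oplus X_0\to Y$, once it kills $K$, factors through $B$ and restricts to $f$ on $A$ via the inclusion $a\mapsto(a,0)$; you clearly have this in mind, but spelling it out closes the loop.
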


In the following, $\mathcal{E}$ will denote the class of short exact sequences in $\mathcal{A}$ which are right $\mathcal{X}$-acyclic (equivalently, left $\mathcal{Y}$-acyclic). Then $(\mathcal{A}, \mathcal{E})$ is an exact category.

Inspired by \cite[Definition 3.3]{Gil04}, we have the following:

\begin{definition}\label{def:Com1}
\begin{enumerate}
\item $\widetilde{\mathcal{E}}$: the class of right $\mathcal{X}$-acyclic (left $\mathcal{Y}$-acyclic) complexes.
\item $\widetilde{\mathcal{X}}_{\mathcal{E}}$: the class of complexes $X\in\widetilde{\mathcal{E}}$ with all ${\rm Z}_nX \in \mathcal{X}$.
\item $\mathcal{E}\text{-}{\rm dw}\widetilde{\mathcal{X}}$: the class of complexes $X$ for which each item $X_n\in \mathcal{X}$.
\item $\mathcal{E}\text{-}{\rm dg}\widetilde{\mathcal{X}}$: the class of complexes $X\in \mathcal{E}\text{-}{\rm dw}\widetilde{\mathcal{X}}$ and for which every map $X\rightarrow E$ is null homotopic whenever $E\in \widetilde{\mathcal{E}}$.
\item $\mathcal{E}\text{-}{\rm ac}\widetilde{\mathcal{X}}$: $ = \mathcal{E}\text{-}{\rm dw}\widetilde{\mathcal{X}} \cap \widetilde{\mathcal{E}}$ the class of complexes $X$ which are right $\mathcal{X}$-acyclic with each item $X_n\in \mathcal{X}$.
\end{enumerate}
Dually, $\widetilde{\mathcal{Y}}_\mathcal{E}$, $\mathcal{E}\text{-}{\rm dw}\widetilde{\mathcal{Y}}$, $\mathcal{E}\text{-}{\rm dg}\widetilde{\mathcal{Y}}$ and $\mathcal{E}\text{-}{\rm ac}\widetilde{\mathcal{Y}}$ are defined.
\end{definition}

The prefix ``$\mathcal{E}$'' in ``$\mathcal{E}\text{-}{\rm dw}\widetilde{\mathcal{X}}$'' is used to indicate that we consider the right orthogonal $(\mathcal{E}\text{-}{\rm dw}\widetilde{\mathcal{X}})^{\perp}$  with respect to ${\rm Ext}^1_{{\rm Ch}(\mathcal{E})}(-, -)$.

It is direct to check the following facts.

\begin{lemma}\label{lem:SXDX}
\begin{enumerate}
\item For any $M\in \mathcal{X}$, one has ${\rm S}^nM\in\mathcal{E}\text{-}{\rm dg}\widetilde{\mathcal{X}}$, and ${\rm D}^nM\in \widetilde{\mathcal{X}}_\mathcal{E}$.
\item Let $0\rightarrow X'\rightarrow X''\rightarrow X\rightarrow 0$ be a short exact sequence in ${\rm Ch}(\mathcal{A}, \mathcal{E})$ with $X\in \mathcal{E}\text{-}{\rm dg}\widetilde{\mathcal{X}}$. Then $X'\in \mathcal{E}\text{-}{\rm dg}\widetilde{\mathcal{X}}$ if and only if $X''\in \mathcal{E}\text{-}{\rm dg}\widetilde{\mathcal{X}}$
\end{enumerate}
\end{lemma}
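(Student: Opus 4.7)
The plan is to treat part (1) as a direct verification from the definitions, and to handle part (2) by first showing that the given short exact sequence is degreewise split, then invoking a long exact sequence argument in ${\rm Ext}^*_{dw}(-,E)$.

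For part (1), I would first note that ${\rm S}^nM$ has every component either $M\in\mathcal{X}$ or $0$, so it already lies in $\mathcal{E}\text{-}{\rm dw}\widetilde{\mathcal{X}}$. Given any $E\in\widetilde{\mathcal{E}}$ and any chain map $f:{\rm S}^nM\to E$, the single nonzero component $f_n:M\to E_n$ is a cycle in the complex ${\rm Hom}_\mathcal{A}(M,E)$, and the latter is acyclic because $M\in\mathcal{X}$ and $E$ is right $\mathcal{X}$-acyclic. Hence $f_n=d^E_{n+1}s_n$ for some $s_n:M\to E_{n+1}$, which serves as a null homotopy for $f$; this puts ${\rm S}^nM\in\mathcal{E}\text{-}{\rm dg}\widetilde{\mathcal{X}}$. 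For ${\rm D}^nM$, one computes ${\rm Hom}_\mathcal{A}(X,{\rm D}^nM)\cong{\rm D}^n{\rm Hom}_\mathcal{A}(X,M)$, which is contractible for every $X\in\mathcal{X}$, so ${\rm D}^nM\in\widetilde{\mathcal{E}}$; and its only nonzero cycle is $M\in\mathcal{X}$, placing it in $\widetilde{\mathcal{X}}_\mathcal{E}$.

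For part (2), my first step is to verify that $0\to X'\to X''\to X\to 0$ is degreewise split. Each degree-$n$ piece lies in $\mathcal{E}$ with $X_n\in\mathcal{X}$, so ${\rm Hom}_\mathcal{A}(X_n,-)$ preserves its exactness; lifting $1_{X_n}$ then yields a section of $X''_n\twoheadrightarrow X_n$ and the isomorphism $X''_n\cong X'_n\oplus X_n$. This immediately equates the ``dw''-condition on $X'$ with that on $X''$, using standard closure of $\mathcal{X}$ under direct sums and summands. For the ``dg''-part, Lemma \ref{lem:Gil04} rephrases the null-homotopy clause of Definition \ref{def:Com1}(4) as the vanishing ${\rm Ext}^1_{dw}(C,E)=0$ for every $E\in\widetilde{\mathcal{E}}$ (where one uses that $\widetilde{\mathcal{E}}$ is stable under $\Sigma$). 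The degreewise split sequence induces a long exact sequence in ${\rm Ext}^*_{dw}(-,E)$; combined with the vanishing ${\rm Ext}^1_{dw}(X,E)=0$ coming from the hypothesis on $X$, this yields an isomorphism ${\rm Ext}^1_{dw}(X'',E)\cong{\rm Ext}^1_{dw}(X',E)$, so vanishing on one side is equivalent to vanishing on the other.

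The main obstacle I foresee is securing the degreewise splitting cleanly at the start; once it is in hand, the ``dw''-part becomes immediate and the ``dg''-part reduces to a formal long-exact-sequence chase against the given vanishing on $X$.
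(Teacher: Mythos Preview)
Your proposal is correct. The paper itself offers no proof here---it merely asserts the facts are ``direct to check''---so there is nothing to compare against; your write-up supplies exactly the routine verification the authors omit. Two minor points worth tightening: the direction $X''\in\mathcal{E}\text{-}{\rm dw}\widetilde{\mathcal{X}}\Rightarrow X'\in\mathcal{E}\text{-}{\rm dw}\widetilde{\mathcal{X}}$ implicitly uses closure of $\mathcal{X}$ under direct summands, which holds in all of the paper's applications (cotorsion pairs, Gorenstein projectives) but is not part of the bare definition of a balanced pair; and to extract the full \emph{isomorphism} ${\rm Ext}^1_{dw}(X'',E)\cong{\rm Ext}^1_{dw}(X',E)$ from the long exact sequence you also need ${\rm Ext}^2_{dw}(X,E)=0$, which follows from the same shift-stability you already invoked since ${\rm Ext}^2_{dw}(X,E)\cong{\rm Ext}^1_{dw}(X,\Sigma E)$ in the degreewise-split (Frobenius) exact structure. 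Neither point affects the validity of your approach.
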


Let ${\bf K}(\mathcal{A})$ be the homotopy category of $\mathcal{A}$ and ${\bf K}(\mathcal{X})$ the subcategory of ${\bf K}(\mathcal{A})$ whose objects are complexes in $\mathcal{E}\text{-}{\rm dw}\widetilde{\mathcal{X}}$; see \cite[Proposition 3.5]{Chen10}.

Denote by ${\bf K}_{\mathcal{E}\text{-}{\rm ac}}(\mathcal{X})$ and ${\bf K}_{\mathcal{E}\text{-}{\rm ac}}(\mathcal{Y})$ the subcategory of complexes in $\mathcal{E}\text{-}{\rm ac}\widetilde{\mathcal{X}}$ and $\mathcal{E}\text{-}{\rm ac}\widetilde{\mathcal{Y}}$, respectively. In this section, we intend to find model structures to realize ${\bf K}_{\mathcal{E}\text{-}{\rm ac}}(\mathcal{X})$ and ${\bf K}_{\mathcal{E}\text{-}{\rm ac}}(\mathcal{Y})$; see Theorem \ref{thm:M4Sig} and Remark \ref{rem:Sig}. For this order we need the following results, which imply model structures for the chain homotopy categories and relative derived category, see \cite{HRYY25}.

\begin{lemma}\cite[Proposition 3.9]{HRYY25}\label{lem:ccpofdg}
There are complete cotorsion pairs $(\widetilde{\mathcal{X}}_\mathcal{E}, {\rm Ch}(\mathcal{A}))$ and $({\rm Ch}(\mathcal{A}), \widetilde{\mathcal{Y}}_\mathcal{E})$.
\end{lemma}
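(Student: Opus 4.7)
My plan is to establish $(\widetilde{\mathcal{X}}_\mathcal{E}, {\rm Ch}(\mathcal{A}))$ and deduce $({\rm Ch}(\mathcal{A}), \widetilde{\mathcal{Y}}_\mathcal{E})$ by the dual construction (left $\mathcal{Y}$-approximations in place of right $\mathcal{X}$-approximations). Since the right class of the first pair is the whole category, the required orthogonality reduces to verifying ${\rm Ext}^1_{{\rm Ch}(\mathcal{E})}(X, C) = 0$ for every $X \in \widetilde{\mathcal{X}}_\mathcal{E}$ and every complex $C$. The key preliminary observation is that any $X \in \widetilde{\mathcal{X}}_\mathcal{E}$ is actually a split exact complex in $\mathcal{A}$ with each $X_n \in \mathcal{X}$: right $\mathcal{X}$-acyclicity of $X$ together with the acyclicity of ${\rm Hom}_\mathcal{A}(W, X)$ for $W \in \mathcal{X}$ shows that each sequence ${\rm Z}_{n+1}X \rightarrowtail X_{n+1} \twoheadrightarrow {\rm Z}_n X$ lies in $\mathcal{E}$; since ${\rm Z}_n X \in \mathcal{X}$, applying ${\rm Hom}_\mathcal{A}({\rm Z}_n X, -)$ lifts the identity on ${\rm Z}_n X$ to a splitting, yielding $X_{n+1} \cong {\rm Z}_{n+1} X \oplus {\rm Z}_n X \in \mathcal{X}$ and a chain contraction on $X$.

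Given this, any short exact sequence in ${\rm Ch}(\mathcal{E})$ ending in $X$ is itself degreewise split, because $X_n \in \mathcal{X}$ and ${\rm Hom}_\mathcal{A}(X_n, -)$ preserves $\mathcal{E}$-exactness. Thus ${\rm Ext}^1_{{\rm Ch}(\mathcal{E})}(X, C) \subseteq {\rm Ext}^1_{dw}(X, C)$, and Lemma \ref{lem:Gil04} identifies the latter with ${\rm Hom}_{{\rm Ch}(\mathcal{A})}(X, \Sigma C)/\!\sim$, which vanishes because every chain map from a contractible complex is null homotopic. The converse inclusion ${}^{\perp}{\rm Ch}(\mathcal{A}) \subseteq \widetilde{\mathcal{X}}_\mathcal{E}$ will follow, via completeness, from a standard summand argument, since $\widetilde{\mathcal{X}}_\mathcal{E}$ is visibly closed under direct summands.

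It remains to establish completeness. The preenvelope condition is immediate by taking $M \rightarrowtail M \twoheadrightarrow 0$. For the precover of a complex $M$, admissibility of $(\mathcal{X}, \mathcal{Y})$ provides, for each $n$, a right $\mathcal{X}$-approximation $\phi_n : X_n \twoheadrightarrow M_{n+1}$ which is an admissible epimorphism. Set $F = \bigoplus_n {\rm D}^{n+1}(X_n)$; by Lemma \ref{lem:SXDX}(1) and closure of $\widetilde{\mathcal{X}}_\mathcal{E}$ under the coproducts involved, $F \in \widetilde{\mathcal{X}}_\mathcal{E}$. The $\phi_n$ assemble into a chain map $F \to M$ whose degree-$n$ component $X_n \oplus X_{n-1} \to M_n$ is, up to a split-off copy of $X_n \stackrel{=}{\to} X_n$, just the admissible epi $\phi_{n-1}$; hence its kernel lies in $\mathcal{E}$ in every degree. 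I expect the main obstacle to be this final assembly: one must carefully verify both that $F$ genuinely belongs to $\widetilde{\mathcal{X}}_\mathcal{E}$ and that the kernel of $F \to M$ is degreewise in $\mathcal{E}$, both of which rely on admissibility of the balanced pair together with the implicit coproduct closure of $\mathcal{X}$. The pair $({\rm Ch}(\mathcal{A}), \widetilde{\mathcal{Y}}_\mathcal{E})$ is then obtained by formally dualizing each step.
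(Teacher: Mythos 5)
The paper does not actually prove this lemma: it is imported verbatim from \cite[Proposition 3.9]{HRYY25}, so there is no in-text argument to compare yours against. On its own merits, your proof is correct and follows the route one would expect that reference to take. The two substantive points are exactly the ones you identify: (i) every $X\in\widetilde{\mathcal{X}}_\mathcal{E}$ is contractible with all terms in $\mathcal{X}$ (note that exactness of $X$ in $\mathcal{A}$, needed before you can speak of the conflations ${\rm Z}_{n+1}X\rightarrowtail X_{n+1}\twoheadrightarrow {\rm Z}_nX$, itself uses admissibility: an epic right $\mathcal{X}$-approximation of ${\rm Z}_nX$ factors through ${\rm B}_nX$ by right $\mathcal{X}$-acyclicity, forcing ${\rm B}_nX={\rm Z}_nX$), whence every ${\rm Ch}(\mathcal{E})$-extension ending in $X$ is degreewise split and Lemma \ref{lem:Gil04} kills it; and (ii) the special precover $\bigoplus_n{\rm D}^{n+1}(X_n)\to M$ built from epic right $\mathcal{X}$-approximations, which works because the sum is degreewise finite and the degree-$m$ component $(\phi_{m-1},\,d^M_{m+1}\phi_m)\colon X_{m-1}\oplus X_m\to M_m$ restricts on the first summand to the admissible epi $\phi_{m-1}$, hence is itself an admissible epi in $\mathcal{E}$ (your phrase ``up to a split-off copy'' is loose, but this is the correct justification). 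The only hypothesis you use silently is that $\mathcal{X}$ is closed under direct summands, which is needed both for ``$\widetilde{\mathcal{X}}_\mathcal{E}$ is visibly closed under direct summands'' and hence for the reverse inclusion ${}^{\perp}{\rm Ch}(\mathcal{A})\subseteq\widetilde{\mathcal{X}}_\mathcal{E}$; the paper's blanket convention only assumes closure under isomorphisms and finite direct sums, so this should be stated (it holds in all the intended examples such as $\mathcal{GP}$ and $\mathcal{GI}$). The dualization to $({\rm Ch}(\mathcal{A}),\widetilde{\mathcal{Y}}_\mathcal{E})$ is legitimate because, by Lemma \ref{lem:XY-ac}, the exact structure $\mathcal{E}$ is self-dual with respect to the balanced pair.
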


This result implies the following model structures:

\begin{lemma}\cite[Theorem 4.10]{HRYY25}\label{lem:modelofdw}
For the exact category ${\rm Ch}(\mathcal{A}, \mathcal{E})$;

If $(\mathcal{E}\text{-}{\rm dw}\widetilde{\mathcal{X}})^{\perp}$ is closed under direct sums, then $\mathcal{M}_{dw\mathcal{X}} = (\mathcal{E}\text{-}{\rm dw}\widetilde{\mathcal{X}}, (\mathcal{E}\text{-}{\rm dw}\widetilde{\mathcal{X}})^{\perp}, {\rm Ch}(\mathcal{A}))$ is a hereditary model structure  with ${\rm Ho}(\mathcal{M}_{dw\mathcal{X}}) \simeq {\bf K}(\mathcal{X})$;

If ${}^{\perp}(\mathcal{E}\text{-}{\rm dw}\widetilde{\mathcal{Y}})$ is closed under direct products, then $\mathcal{M}_{dw\mathcal{Y}} = ({\rm Ch}(\mathcal{A}), {^{\perp}(\mathcal{E}\text{-}{\rm dw}\widetilde{\mathcal{Y}})}, \mathcal{E}\text{-}{\rm dw}\widetilde{\mathcal{Y}})$ is a hereditary model structure with ${\rm Ho}(\mathcal{M}_{dw\mathcal{Y}}) \simeq {\bf K}(\mathcal{Y})$.

\end{lemma}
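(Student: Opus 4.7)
The plan is to apply the exact-category Hovey correspondence of Lemma \ref{lem:Gil3.3} on the weakly idempotent complete exact category ${\rm Ch}(\mathcal{A}, \mathcal{E})$: it suffices to produce the hereditary Hovey triple $(\mathcal{C},\mathcal{W},\mathcal{F}) = (\mathcal{E}\text{-}{\rm dw}\widetilde{\mathcal{X}}, (\mathcal{E}\text{-}{\rm dw}\widetilde{\mathcal{X}})^{\perp}, {\rm Ch}(\mathcal{A}))$. Because $\mathcal{F} = {\rm Ch}(\mathcal{A})$ is the whole category, the second required cotorsion pair $(\mathcal{C} \cap \mathcal{W}, \mathcal{F})$ is automatically supplied by Lemma \ref{lem:ccpofdg} as long as $\mathcal{C} \cap \mathcal{W}$ can be identified with $\widetilde{\mathcal{X}}_\mathcal{E}$. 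The work therefore reduces to three verifications: (a) $(\mathcal{E}\text{-}{\rm dw}\widetilde{\mathcal{X}}, (\mathcal{E}\text{-}{\rm dw}\widetilde{\mathcal{X}})^{\perp})$ is a complete hereditary cotorsion pair in ${\rm Ch}(\mathcal{A}, \mathcal{E})$; (b) $\mathcal{E}\text{-}{\rm dw}\widetilde{\mathcal{X}} \cap (\mathcal{E}\text{-}{\rm dw}\widetilde{\mathcal{X}})^{\perp} = \widetilde{\mathcal{X}}_\mathcal{E}$; and (c) $(\mathcal{E}\text{-}{\rm dw}\widetilde{\mathcal{X}})^{\perp}$ is thick.

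For (a), I would cogenerate the cotorsion pair from a set of complexes with entries in $\mathcal{X}$, namely the disks $\{{\rm D}^n X\}$ and stalks $\{{\rm S}^n X\}$ restricted to a skeletally small collection of $X \in \mathcal{X}$ of bounded cardinality, and show using the AB5 hypothesis that every object of $\mathcal{E}\text{-}{\rm dw}\widetilde{\mathcal{X}}$ is a transfinite ${\rm Ch}(\mathcal{E})$-extension of these. The hypothesis that $(\mathcal{E}\text{-}{\rm dw}\widetilde{\mathcal{X}})^{\perp}$ is closed under direct sums is exactly what is needed to run an Eklof--Trlifaj / small-object construction in this exact setting to produce the special precovers and preenvelopes. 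Hereditariness is immediate from closure of $\mathcal{X}$ under $\mathcal{E}$-extensions (it is the left half of the cotorsion pair implicit in the admissible balanced pair). For (b), the inclusion $\widetilde{\mathcal{X}}_\mathcal{E} \subseteq \mathcal{C} \cap \mathcal{W}$ follows from Lemma \ref{lem:ccpofdg} together with the observation that an $\mathcal{E}$-acyclic complex with cycles in $\mathcal{X}$ has every term in $\mathcal{X}$. For the reverse, given $X \in \mathcal{C} \cap \mathcal{W}$, any ${\rm Ch}(\mathcal{E})$-extension by $X$ splits degreewise since $X$ has entries in $\mathcal{X}$, whence ${\rm Ext}^1_{{\rm Ch}(\mathcal{E})}(-, X)$ agrees with ${\rm Ext}^1_{dw}(-, X)$ on $\mathcal{E}\text{-}{\rm dw}\widetilde{\mathcal{X}}$; applying Lemma \ref{lem:Gil04} with test objects ${\rm S}^n Y$ for $Y \in \mathcal{X}$ forces ${\rm H}_{\ast}{\rm Hom}_\mathcal{A}(Y, X) = 0$, which simultaneously yields right $\mathcal{X}$-acyclicity of $X$ and cycles ${\rm Z}_n X \in \mathcal{X}$. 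Step (c) then falls out of a standard two-out-of-three argument with the long exact sequence of ${\rm Ext}^{\ast}_{{\rm Ch}(\mathcal{E})}(W, -)$ for $W \in \mathcal{E}\text{-}{\rm dw}\widetilde{\mathcal{X}}$, using the hereditariness just established.

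For the homotopy category, the Frobenius description following Lemma \ref{lem:Gil3.3} yields ${\rm Ho}(\mathcal{M}_{dw\mathcal{X}}) \simeq \mathcal{E}\text{-}{\rm dw}\widetilde{\mathcal{X}} / \widetilde{\mathcal{X}}_\mathcal{E}$, the quotient by maps factoring through an object of $\widetilde{\mathcal{X}}_\mathcal{E}$. To match this with ${\bf K}(\mathcal{X})$, I would verify both directions: any null-homotopic chain map factors through a coproduct $\bigoplus_n {\rm D}^{n+1}A_n \in \widetilde{\mathcal{X}}_\mathcal{E}$ by Lemma \ref{lem:SXDX}(1), while the degreewise-split phenomenon from (b), together with Lemma \ref{lem:Gil04}, shows that every map from an object of $\mathcal{E}\text{-}{\rm dw}\widetilde{\mathcal{X}}$ to an object of $\widetilde{\mathcal{X}}_\mathcal{E}$ is null-homotopic. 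The dual statement for $\mathcal{M}_{dw\mathcal{Y}}$ proceeds entirely symmetrically, using $({\rm Ch}(\mathcal{A}), \widetilde{\mathcal{Y}}_\mathcal{E})$ from Lemma \ref{lem:ccpofdg} and the product-closure hypothesis in place of the sum-closure one. I expect the main obstacle to be step (a): executing the small-object / transfinite extension argument inside the exact category ${\rm Ch}(\mathcal{A}, \mathcal{E})$ rather than the ambient abelian ${\rm Ch}(\mathcal{A})$ requires the direct-sum closure of $(\mathcal{E}\text{-}{\rm dw}\widetilde{\mathcal{X}})^{\perp}$ precisely to keep transfinite composites in the correct class, and most of the technical bookkeeping is in ensuring that the monomorphisms one builds remain $\mathcal{E}$-admissible throughout.
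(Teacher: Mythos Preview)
The paper does not prove this lemma; it is imported from \cite[Theorem 4.10]{HRYY25}. So there is no proof in the present paper to compare against directly. That said, the paper does prove the parallel statement Theorem~\ref{thm:M4Sig} for $\mathcal{E}\text{-}{\rm ac}\widetilde{\mathcal{X}}$, and comparing your outline to that argument exposes a genuine gap in your step~(c).

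Your claim that thickness of $(\mathcal{E}\text{-}{\rm dw}\widetilde{\mathcal{X}})^{\perp}$ ``falls out of a standard two-out-of-three argument with the long exact sequence'' is incorrect. For a hereditary cotorsion pair $(\mathcal{F},\mathcal{C})$ the long exact sequence gives closure of $\mathcal{C}$ under extensions and under cokernels of admissible monomorphisms, but \emph{not} under kernels of admissible epimorphisms. Concretely, given $0\to A\to B\to C\to 0$ in ${\rm Ch}(\mathcal{E})$ with $B,C\in\mathcal{C}$ and $W\in\mathcal{F}$, the sequence
\[
{\rm Hom}(W,B)\longrightarrow {\rm Hom}(W,C)\longrightarrow {\rm Ext}^1_{{\rm Ch}(\mathcal{E})}(W,A)\longrightarrow {\rm Ext}^1_{{\rm Ch}(\mathcal{E})}(W,B)=0
\]
only shows ${\rm Ext}^1(W,A)$ is a quotient of ${\rm Hom}(W,C)$, and there is no reason for ${\rm Hom}(W,B)\to{\rm Hom}(W,C)$ to be surjective. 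The argument the paper uses in the proof of Theorem~\ref{thm:M4Sig} is the correct template: embed $A$ via completeness into $0\to A\to K\to Y\to 0$ with $K\in\mathcal{C}$ and $Y\in\mathcal{F}$, push out along $A\to B$, use extension and cokernel closure to force $Y\in\mathcal{F}\cap\mathcal{C}=\widetilde{\mathcal{X}}_\mathcal{E}$, observe that $Y$ is then contractible with terms in $\mathcal{X}$ so the sequence $0\to A\to K\to Y\to 0$ splits, and conclude $A$ is a summand of $K\in\mathcal{C}$. Note that this argument consumes both completeness and the identification $\mathcal{F}\cap\mathcal{C}=\widetilde{\mathcal{X}}_\mathcal{E}$, so steps (a), (b), (c) are not independent in the way your outline suggests.

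There is also a smaller issue in your step~(b): testing against stalks ${\rm S}^nY$ with $Y\in\mathcal{X}$ only yields right $\mathcal{X}$-acyclicity of $X$, i.e.\ $X\in\mathcal{E}\text{-}{\rm ac}\widetilde{\mathcal{X}}$; it does not force ${\rm Z}_nX\in\mathcal{X}$. The clean argument is to test with $X$ itself: ${\rm Ext}^1_{{\rm Ch}(\mathcal{E})}(X,\Sigma^{-1}X)={\rm Ext}^1_{dw}(X,\Sigma^{-1}X)\cong{\rm Hom}_{{\bf K}(\mathcal{A})}(X,X)=0$ forces $X$ to be contractible, whence its cycles are summands of its terms and lie in $\mathcal{X}$.

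Finally, your step~(a) as written needs more than the stated hypotheses: a small-object/Eklof--Trlifaj argument requires set-theoretic input (a generator, or at least a deconstructibility statement for $\mathcal{E}\text{-}{\rm dw}\widetilde{\mathcal{X}}$) that is not part of the standing assumptions on $\mathcal{A}$. The closure of $(\mathcal{E}\text{-}{\rm dw}\widetilde{\mathcal{X}})^{\perp}$ under direct sums is not by itself what drives such an argument; you would need to explain precisely how it substitutes for the missing smallness hypothesis, or produce the approximations by a direct construction from the balanced pair instead.
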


Furthermore, we obtain the following realization for the relative derived category ${\bf D}_\mathcal{X}(\mathcal{A})$,

\begin{lemma}\cite[Theorem 3.10]{HRYY25}\label{lem:modelofdg}
There are hereditary model structures $(\mathcal{E}\text{-}{\rm dg}\widetilde{\mathcal{X}}, \widetilde{\mathcal{E}}, {\rm Ch}(\mathcal{A}))$ and $({\rm Ch}(\mathcal{A}), \widetilde{\mathcal{E}}, \mathcal{E}\text{-}{\rm dg}\widetilde{\mathcal{Y}})$ on the exact category ${\rm Ch}(\mathcal{A}, \mathcal{E})$, with homotopy categories ${\bf D}_\mathcal{X}(\mathcal{A}) \simeq {\bf D}_\mathcal{Y}(\mathcal{A})$.
\end{lemma}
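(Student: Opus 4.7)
The plan is to invoke Gillespie's correspondence (Lemma \ref{lem:Gil3.3}) and verify that each of the two triples forms a hereditary Hovey triple on the exact category ${\rm Ch}(\mathcal{A}, \mathcal{E})$, which is weakly idempotent complete since $\mathcal{A}$ is abelian. I would focus on the first triple $(\mathcal{E}\text{-}{\rm dg}\widetilde{\mathcal{X}}, \widetilde{\mathcal{E}}, {\rm Ch}(\mathcal{A}))$; the second is dual, the two structures sharing the same class of trivial objects $\widetilde{\mathcal{E}}$ thanks to Lemma \ref{lem:XY-ac}. Once the Hovey structures are established, the identification ${\rm Ho}(\mathcal{M}) \simeq {\bf D}_\mathcal{X}(\mathcal{A})$ follows because the weak equivalences turn out to be precisely the chain maps whose mapping cones lie in $\widetilde{\mathcal{E}}$, so the Quillen localization coincides with the Verdier quotient ${\bf K}(\mathcal{A})/\widetilde{\mathcal{E}}$, which is the defining description of ${\bf D}_\mathcal{X}(\mathcal{A})$.

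The first task is to check that $\widetilde{\mathcal{E}}$ is thick in ${\rm Ch}(\mathcal{A}, \mathcal{E})$: closure under direct summands is immediate, and two-out-of-three in short exact sequences of ${\rm Ch}(\mathcal{E})$ follows by applying ${\rm Hom}_\mathcal{A}(X, -)$ for each $X \in \mathcal{X}$ and chasing the long homology sequence of the resulting short exact sequence of complexes of abelian groups. Next, I would establish the identification $\mathcal{E}\text{-}{\rm dg}\widetilde{\mathcal{X}} \cap \widetilde{\mathcal{E}} = \widetilde{\mathcal{X}}_\mathcal{E}$ via a standard comparison argument: for $\supseteq$, given $X \in \widetilde{\mathcal{X}}_\mathcal{E}$ and a chain map $f: X \to E$ with $E \in \widetilde{\mathcal{E}}$, construct a null-homotopy $\{s_n\}$ inductively on degree, exploiting that each short exact sequence $0 \to {\rm Z}_nX \to X_n \to {\rm Z}_{n-1}X \to 0$ lies in $\mathcal{E}$ with ${\rm Z}_{n-1}X \in \mathcal{X}$ and is therefore ${\rm Hom}_\mathcal{A}(-, E_\bullet)$-exact; for $\subseteq$, the admissibility of the balanced pair makes $\mathcal{X}$ coincide with the $\mathcal{E}$-projectives, so the dg-orthogonality forces each cycle of $X$ into $\mathcal{X}$. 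Combining this identification with Lemma \ref{lem:ccpofdg} immediately yields the complete hereditary cotorsion pair $(\mathcal{E}\text{-}{\rm dg}\widetilde{\mathcal{X}} \cap \widetilde{\mathcal{E}}, {\rm Ch}(\mathcal{A})) = (\widetilde{\mathcal{X}}_\mathcal{E}, {\rm Ch}(\mathcal{A}))$.

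The main obstacle is constructing the other cotorsion pair $(\mathcal{E}\text{-}{\rm dg}\widetilde{\mathcal{X}}, \widetilde{\mathcal{E}})$ and proving its completeness. For the orthogonality $\mathcal{E}\text{-}{\rm dg}\widetilde{\mathcal{X}} \subseteq {}^\perp \widetilde{\mathcal{E}}$ with respect to ${\rm Ext}^1_{{\rm Ch}(\mathcal{E})}$, I would translate via Lemma \ref{lem:Gil04}: the vanishing of $\mathrm{Ext}^1$ amounts, up to a shift, to every chain map from $X$ to a complex in $\widetilde{\mathcal{E}}$ being null-homotopic, which is exactly the defining property of $\mathcal{E}\text{-}{\rm dg}\widetilde{\mathcal{X}}$. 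For the reverse inclusion, testing against the disk complexes ${\rm D}^n A$ (contractible, hence in $\widetilde{\mathcal{E}}$ by Lemma \ref{lem:SXDX}) forces each $X_n$ to be $\mathcal{E}$-projective, hence in $\mathcal{X}$, while the null-homotopy condition against arbitrary $E \in \widetilde{\mathcal{E}}$ then drops out of the orthogonality. The most delicate step is completeness, which I would handle by a small object argument in the style of Hovey, Gillespie, and \v{S}\v{t}ov\'{\i}\v{c}ek: build factorizations as transfinite pushouts along inclusions ${\rm S}^n M \hookrightarrow {\rm D}^{n+1}M$ for $M$ in (a set of representatives of) $\mathcal{X}$, invoking the AB5 hypothesis to guarantee that these transfinite compositions remain in ${\rm Ch}(\mathcal{A}, \mathcal{E})$ with the correct orthogonality. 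Hereditariness of both cotorsion pairs then reduces to Lemma \ref{lem:SXDX}(2) together with a short diagram chase on kernels of admissible epimorphisms inside $\mathcal{E}\text{-}{\rm dg}\widetilde{\mathcal{X}}$ and $\widetilde{\mathcal{X}}_\mathcal{E}$ respectively.
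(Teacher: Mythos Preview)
The paper does not supply a proof of this lemma; it is quoted from \cite[Theorem 3.10]{HRYY25}, so there is no in-paper argument to compare against. Your overall architecture is nonetheless the right one and matches the standard template: verify the Hovey triple via Lemma~\ref{lem:Gil3.3}, check thickness of $\widetilde{\mathcal{E}}$ by applying ${\rm Hom}_\mathcal{A}(X,-)$ degreewise, identify $\mathcal{E}\text{-}{\rm dg}\widetilde{\mathcal{X}}\cap\widetilde{\mathcal{E}}$ with $\widetilde{\mathcal{X}}_\mathcal{E}$, and read off the homotopy category as the Verdier quotient. The orthogonality computations you sketch, reducing ${\rm Ext}^1_{{\rm Ch}(\mathcal{E})}$ to ${\rm Ext}^1_{dw}$ on $\mathcal{E}\text{-}{\rm dw}\widetilde{\mathcal{X}}$ and then to null-homotopy via Lemma~\ref{lem:Gil04}, are also correct.

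The genuine gap is your proposed completeness proof for $(\mathcal{E}\text{-}{\rm dg}\widetilde{\mathcal{X}},\widetilde{\mathcal{E}})$ via a small object argument along $\{{\rm S}^nM\hookrightarrow{\rm D}^{n+1}M : M\in\mathcal{X}\}$. Nothing in the standing hypotheses (complete, AB5, admissible balanced pair) provides a \emph{set} of representatives for $\mathcal{X}$, nor smallness of its objects relative to transfinite pushouts; AB5 controls exactness of filtered colimits but does not furnish the set-theoretic input the small object argument requires. In this generality that step simply does not start. The approach that works, and is the one \cite{HRYY25} pursues, is to build special $\mathcal{E}\text{-}{\rm dg}\widetilde{\mathcal{X}}$-precovers directly from the balanced pair data: for bounded-above complexes one constructs a relative Cartan--Eilenberg type resolution inductively using the degreewise right $\mathcal{X}$-approximations, and for unbounded complexes one takes a direct limit over brutal truncations, with AB5 guaranteeing that the colimit short exact sequence remains in ${\rm Ch}(\mathcal{E})$ and the kernel stays in $\widetilde{\mathcal{E}}$. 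Salce's trick (using $\widetilde{\mathcal{Y}}_\mathcal{E}$ from Lemma~\ref{lem:ccpofdg}, exactly as in the proof of Proposition~\ref{prop:(acX,+)}) then yields special preenvelopes.
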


Note that ${\bf D}_\mathcal{X}(\mathcal{A})$ coincides with Neeman's derived category of the exact category $(\mathcal{A}, \mathcal{E})$ in \cite[Construction 1.5]{Nee90}.

In order to establish the model structure for ${\bf K}_{\mathcal{E}\text{-}{\rm ac}}(\mathcal{X})$, we need the following:

\begin{lemma}\label{lem:SnY}
For any complex $X\in \widetilde{\mathcal{E}}$ and any object $Y\in \mathcal{Y}$, the chain map $X\rightarrow {\rm S}^nY$ is null homotopic.
\end{lemma}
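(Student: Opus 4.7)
The plan is to unpack what a chain map $X \to {\rm S}^n Y$ and its null-homotopy concretely amount to, and then read off the conclusion directly from the defining property of $\widetilde{\mathcal{E}}$. First I would observe that because ${\rm S}^n Y$ is concentrated in degree $n$, a chain map $f \colon X \to {\rm S}^n Y$ is given by a single morphism $f_n \colon X_n \to Y$ (all other components being forced to be zero by the shape of ${\rm S}^n Y$), and the chain-map identity reduces to the single condition $f_n \circ d^X_{n+1} = 0$. Similarly, a null-homotopy from $f$ to $0$ is exactly the data of one map $s \colon X_{n-1} \to Y$ satisfying $f_n = s \circ d^X_n$. So the statement is equivalent to: every $\varphi \colon X_n \to Y$ with $\varphi \circ d^X_{n+1} = 0$ factors through $d^X_n$.

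Next I would invoke that $X \in \widetilde{\mathcal{E}}$, by the very definition in Definition \ref{def:Com1}, is left $\mathcal{Y}$-acyclic; hence the cochain complex obtained by applying ${\rm Hom}_\mathcal{A}(-, Y)$ degreewise to $X$ is exact. Its cocycles at position $n$ are precisely the maps $\varphi \colon X_n \to Y$ annihilated by precomposition with $d^X_{n+1}$, and its coboundaries at position $n$ are precisely the maps of the form $s \circ d^X_n$ with $s \colon X_{n-1} \to Y$. Exactness in that degree therefore delivers exactly the factorization identified in the previous paragraph, and so $f \sim 0$.

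Alternatively, one can argue via Lemma \ref{lem:Gil04} applied to $D = {\rm S}^0 Y$: the abelian group ${\rm Hom}_{{\rm Ch}(\mathcal{A})}(X, {\rm S}^n Y)/{\sim}$ is identified with a homology group of the Hom-complex ${\rm Hom}_\mathcal{A}(X, {\rm S}^0 Y)$, which is merely a reindexing of the acyclic cochain complex ${\rm Hom}_\mathcal{A}(X, Y)$, and hence vanishes. Either way I do not anticipate any real obstacle; the only care required is aligning the indexing conventions for ${\rm S}^n Y$, the suspension $\Sigma$, and the Hom-complex, which is purely routine bookkeeping.
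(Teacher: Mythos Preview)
Your proposal is correct and follows essentially the same idea as the paper: both arguments reduce to the fact that a complex in $\widetilde{\mathcal{E}}$ is left $\mathcal{Y}$-acyclic, so a map $X_n\to Y$ vanishing on $\mathrm{Im}\,d^X_{n+1}$ must factor through $d^X_n$. The only cosmetic difference is that the paper routes this through the short exact sequence $0\to Z_{n-1}X\to X_{n-1}\to Z_{n-2}X\to 0$ and Lemma~\ref{lem:XY-ac}, whereas you invoke exactness of the Hom-complex $\mathrm{Hom}_\mathcal{A}(X,Y)$ directly from Definition~\ref{def:Com1}(1); your route is a touch more streamlined since it avoids decomposing $X$ into cycle short exact sequences.
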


\begin{proof}
We infer from the chain map $f: X\rightarrow {\rm S}^nY$ that $f_nd_{n+1}^X = 0$. Then, $f_n: X_n\rightarrow Y$ induces a map
$g: X_n/{\rm B}_nX \cong {\rm Z}_{n-1}X \rightarrow Y$. Since $X\in \widetilde{\mathcal{E}}$, we infer from Lemma \ref{lem:XY-ac} that the short exact sequence $0\rightarrow {\rm Z}_{n-1}X\rightarrow X_{n-1}\rightarrow {\rm Z}_{n-2}X\rightarrow 0$ is left $\mathcal{Y}$-acyclic. Hence, ${\rm Hom}_{\mathcal{A}}(X_{n-1}, Y)\rightarrow {\rm Hom}_{\mathcal{A}}({\rm Z}_{n-1}X, Y)$ is epic, and then, there is a preimage of $g\in {\rm Hom}_{\mathcal{A}}({\rm Z}_{n-1}X, Y)$, i.e. a map
$s: X_{n-1}\rightarrow Y$, such that $f_n = sd_n^X$. Note that all $f_i$ other than $f_n$  are 0. Then, it follows that the chain map $f: X\rightarrow {\rm S}^nY$ is null homotopic.
\end{proof}

\begin{proposition}\label{prop:(acX,+)}
If $(\mathcal{E}\text{-}{\rm dw}\widetilde{\mathcal{X}})^{\perp}$ is closed under direct sums, then $(\mathcal{E}\text{-}{\rm ac}\widetilde{\mathcal{X}}, (\mathcal{E}\text{-}{\rm ac}\widetilde{\mathcal{X}})^{\perp})$ is a complete cotorsion pair in ${\rm Ch}(\mathcal{A}, \mathcal{E})$.
\end{proposition}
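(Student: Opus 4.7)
The proof will combine the two complete hereditary cotorsion pairs available to us, namely $(\mathcal{E}\text{-}{\rm dw}\widetilde{\mathcal{X}}, (\mathcal{E}\text{-}{\rm dw}\widetilde{\mathcal{X}})^{\perp})$ from Lemma \ref{lem:modelofdw} and $(\mathcal{E}\text{-}{\rm dg}\widetilde{\mathcal{X}}, \widetilde{\mathcal{E}})$ from Lemma \ref{lem:modelofdg}. Three structural facts will drive the argument: by definition one has $\mathcal{E}\text{-}{\rm ac}\widetilde{\mathcal{X}} = \mathcal{E}\text{-}{\rm dw}\widetilde{\mathcal{X}} \cap \widetilde{\mathcal{E}}$; the inclusion $\mathcal{E}\text{-}{\rm dg}\widetilde{\mathcal{X}} \subseteq \mathcal{E}\text{-}{\rm dw}\widetilde{\mathcal{X}}$ gives $(\mathcal{E}\text{-}{\rm dw}\widetilde{\mathcal{X}})^{\perp} \subseteq \widetilde{\mathcal{E}}$; and $\widetilde{\mathcal{E}}$ is thick in ${\rm Ch}(\mathcal{A}, \mathcal{E})$, being the class of trivial objects of the hereditary model structure from Lemma \ref{lem:modelofdg}. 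Extension and retract closure of $\mathcal{E}\text{-}{\rm ac}\widetilde{\mathcal{X}}$ are then automatic, and the cotorsion-pair identity $\mathcal{E}\text{-}{\rm ac}\widetilde{\mathcal{X}} = {}^{\perp}((\mathcal{E}\text{-}{\rm ac}\widetilde{\mathcal{X}})^{\perp})$ will follow from completeness by a standard retract argument.

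To produce a special precover of $C \in {\rm Ch}(\mathcal{A}, \mathcal{E})$, I would first take the $(\mathcal{E}\text{-}{\rm dg}\widetilde{\mathcal{X}}, \widetilde{\mathcal{E}})$-precover $0 \to V \to Y \to C \to 0$ with $Y \in \mathcal{E}\text{-}{\rm dg}\widetilde{\mathcal{X}}$ and $V \in \widetilde{\mathcal{E}}$, and then the $(\mathcal{E}\text{-}{\rm dw}\widetilde{\mathcal{X}}, (\mathcal{E}\text{-}{\rm dw}\widetilde{\mathcal{X}})^{\perp})$-precover of $V$, giving $0 \to W \to F \to V \to 0$ with $F \in \mathcal{E}\text{-}{\rm dw}\widetilde{\mathcal{X}}$ and $W \in (\mathcal{E}\text{-}{\rm dw}\widetilde{\mathcal{X}})^{\perp} \subseteq \widetilde{\mathcal{E}}$. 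Thickness of $\widetilde{\mathcal{E}}$ applied to the latter sequence forces $F \in \widetilde{\mathcal{E}}$, hence $F \in \mathcal{E}\text{-}{\rm ac}\widetilde{\mathcal{X}}$. Splicing produces the four-term exact sequence $0 \to W \to F \to Y \to C \to 0$, and from this I would assemble, via a $3\times 3$-diagram construction, an object $A \in \mathcal{E}\text{-}{\rm ac}\widetilde{\mathcal{X}}$ sitting in a short exact sequence $0 \to W \to A \to C \to 0$. Since $\mathcal{E}\text{-}{\rm ac}\widetilde{\mathcal{X}} \subseteq \mathcal{E}\text{-}{\rm dw}\widetilde{\mathcal{X}}$ forces $(\mathcal{E}\text{-}{\rm dw}\widetilde{\mathcal{X}})^{\perp} \subseteq (\mathcal{E}\text{-}{\rm ac}\widetilde{\mathcal{X}})^{\perp}$, the kernel $W$ automatically lies in $(\mathcal{E}\text{-}{\rm ac}\widetilde{\mathcal{X}})^{\perp}$, completing the precover; special preenvelopes are constructed by the dual procedure.

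The main technical obstacle is precisely this last assembly step. The naive pushout along the span $V \hookrightarrow Y$ and $V \twoheadleftarrow F$ is ill-formed due to the directional mismatch, and elementary candidates such as $F \oplus Y$ or simple fibered products fail to lie simultaneously in both $\mathcal{E}\text{-}{\rm dw}\widetilde{\mathcal{X}}$ and $\widetilde{\mathcal{E}}$. The successful construction must exhibit $A$ in two complementary guises: as an extension of $\mathcal{E}\text{-}{\rm dw}\widetilde{\mathcal{X}}$-objects (so that the extension closure of $\mathcal{E}\text{-}{\rm dw}\widetilde{\mathcal{X}}$ delivers $A \in \mathcal{E}\text{-}{\rm dw}\widetilde{\mathcal{X}}$) and as an extension of $\widetilde{\mathcal{E}}$-objects (so that the thickness of $\widetilde{\mathcal{E}}$ delivers $A \in \widetilde{\mathcal{E}}$). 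This is precisely the pattern of the ``two interrelated cotorsion pairs yield a third'' machinery of Becker \cite{Bec14} and Gillespie \cite{Gil08, Gil16-2} already highlighted as the principal technique of the paper; rendering that machinery in the exact-category setting of ${\rm Ch}(\mathcal{A}, \mathcal{E})$ will be the essential work.
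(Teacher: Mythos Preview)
Your retract argument for the identity $\mathcal{E}\text{-}{\rm ac}\widetilde{\mathcal{X}} = {}^{\perp}((\mathcal{E}\text{-}{\rm ac}\widetilde{\mathcal{X}})^{\perp})$ is fine, and differs from the paper's direct verification via Lemma~\ref{lem:SnY}. The completeness argument, however, has a genuine gap that your own diagnosis already points to. From the two sequences $0\to V\to Y\to C\to 0$ and $0\to W\to F\to V\to 0$ you obtain only the Yoneda splice $0\to W\to F\to Y\to C\to 0$, representing a class in $\mathrm{Ext}^2_{{\rm Ch}(\mathcal{E})}(C,W)$; no $3\times 3$ diagram will manufacture from this a short exact sequence $0\to W\to A\to C\to 0$ in $\mathrm{Ext}^1$. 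The ``directional mismatch'' you name is fatal here: the arrows $F\twoheadrightarrow V$ and $V\hookrightarrow Y$ compose rather than form a span or cospan, so neither a pullback nor a pushout is available. The Becker--Gillespie machinery you invoke produces Hovey triples from two compatible cotorsion pairs; it does not provide a mechanism for collapsing a length-two Yoneda extension into a length-one extension.

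The missing idea is that Lemma~\ref{lem:modelofdg} also supplies the cotorsion pair $(\widetilde{\mathcal{E}},\, \mathcal{E}\text{-}{\rm dg}\widetilde{\mathcal{Y}})$, in which $\widetilde{\mathcal{E}}$ is the \emph{left} class. Using that pair one takes a special $\widetilde{\mathcal{E}}$-\emph{precover} $0\to Y\to E\to C\to 0$ with $E\in\widetilde{\mathcal{E}}$ and $Y\in\mathcal{E}\text{-}{\rm dg}\widetilde{\mathcal{Y}}=\widetilde{\mathcal{E}}^{\perp}\subseteq(\mathcal{E}\text{-}{\rm ac}\widetilde{\mathcal{X}})^{\perp}$, then a ${\rm dw}\widetilde{\mathcal{X}}$-precover $0\to Z\to X\to E\to 0$. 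Now $X\to E$ and $Y\to E$ form a genuine cospan, and their pullback gives $0\to K\to X\to C\to 0$ with $X\in\widetilde{\mathcal{E}}\cap\mathcal{E}\text{-}{\rm dw}\widetilde{\mathcal{X}}=\mathcal{E}\text{-}{\rm ac}\widetilde{\mathcal{X}}$ and $K$ an extension of $Y$ by $Z$, hence in $(\mathcal{E}\text{-}{\rm ac}\widetilde{\mathcal{X}})^{\perp}$. In short, the balanced pair is what rescues the construction: the $\mathcal{Y}$-side supplies $\widetilde{\mathcal{E}}$ as a precovering class, which your $\mathcal{X}$-only toolkit cannot. Preenvelopes then follow by Salce's trick using $({\rm Ch}(\mathcal{A}),\widetilde{\mathcal{Y}}_\mathcal{E})$ from Lemma~\ref{lem:ccpofdg}.
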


\begin{proof}
Let $\widehat{\mathcal{C}}$ be the collection of all complexes $C$ satisfying any chain map $X\rightarrow C$ from complexes $X\in \mathcal{E}\text{-}{\rm ac}\widetilde{\mathcal{X}}$ is null homotopic. Analogous to $${\rm Ext}^1_{{\rm Ch}(\mathcal{E})}(X, C) = {\rm Ext}^1_{dw}(X, C)\cong {\rm Hom}_{{\rm Ch}(\mathcal{A})}(X, \Sigma C)/\sim = 0$$ we can prove that $(\mathcal{E}\text{-}{\rm ac}\widetilde{\mathcal{X}})^{\perp} = \widehat{\mathcal{C}}$ and $\mathcal{E}\text{-}{\rm ac}\widetilde{\mathcal{X}} \subseteq {^{\perp}(\widehat{\mathcal{C}})} \subseteq \mathcal{E}\text{-}{\rm dw}\widetilde{\mathcal{X}}$.

Let $X\in {^{\perp}(\widehat{\mathcal{C}})}$. Let $Y$ be any object in $\mathcal{Y}$, and consider the short exact sequence $0\rightarrow {\rm S}^{n+1}Y\rightarrow {\rm D}^{n+1}Y\rightarrow {\rm S}^{n}Y\rightarrow 0$. It follows from Lemma \ref{lem:SnY} that  ${\rm S}^{n+1}Y \in \widehat{\mathcal{C}}$, and then any chain map
$X\rightarrow {\rm S}^{n}Y$ can be lifted to $X\rightarrow {\rm D}^{n+1}Y$. By the natural isomorphisms in \cite[Lemma 3.1]{Gil04}, we have the following commutative diagram
$$\xymatrix@C=5pt@R=20pt{ 0\ar[r] &{\rm Hom}_{{\rm Ch}(\mathcal{A})}(X, {\rm S}^nY) \ar[r] \ar[d]_{\cong} &{\rm Hom}_{{\rm Ch}(\mathcal{A})}(X, {\rm D}^{n+1}Y) \ar[r]\ar[d]^{\cong}&{\rm Hom}_{{\rm Ch}(\mathcal{A})}(X, {\rm S}^{n+1}Y)\ar[r]^{} \ar[d]^{\cong} &0\\
0\ar[r] &{\rm Hom}_{\mathcal{A}}(X_n/{\rm B}_nX, Y)\ar[r] &{\rm Hom}_{\mathcal{A}}(X_n, Y) \ar[r]
&{\rm Hom}_{\mathcal{A}}(X_{n+1}/{\rm B}_{n+1}X, Y) \ar[r] &0 }$$
Then, every sequence $0\rightarrow X_{n+1}/{\rm B}_{n+1}X\rightarrow X_n \rightarrow X_{n}/{\rm B}_{n}X\rightarrow 0$ is left $\mathcal{Y}$-acyclic. This yields that the complex $X$ is left $\mathcal{Y}$-acyclic, i.e. $X\in \widetilde{\mathcal{E}}$.
Hence, we have $X\in {^{\perp}(\widehat{\mathcal{C}})} \subseteq \mathcal{E}\text{-}{\rm dw}\widetilde{\mathcal{X}} \cap \widetilde{\mathcal{E}} = \mathcal{E}\text{-}{\rm ac}\widetilde{\mathcal{X}}$. This implies that $(\mathcal{E}\text{-}{\rm ac}\widetilde{\mathcal{X}}, (\mathcal{E}\text{-}{\rm ac}\widetilde{\mathcal{X}})^{\perp})$ is a cotorsion pair in ${\rm Ch}(\mathcal{A}, \mathcal{E})$.

Also from Lemma \ref{lem:modelofdg}, it follows that for any complex $C$, there is a short exact sequence $0\rightarrow Y\rightarrow E\rightarrow C\rightarrow 0$ in ${\rm Ch}(\mathcal{A}, \mathcal{E})$, for which $E\in \widetilde{\mathcal{E}}$ and $Y\in \mathcal{E}\text{-}{\rm dg}\widetilde{\mathcal{Y}}$. For $E$, by Lemma \ref{lem:modelofdw} we have a short exact sequence $0\rightarrow Z\rightarrow X\rightarrow E\rightarrow 0$ in ${\rm Ch}(\mathcal{A}, \mathcal{E})$, where $X\in \mathcal{E}\text{-}{\rm dw}\widetilde{\mathcal{X}}$ and $Z\in (\mathcal{E}\text{-}{\rm dw}\widetilde{\mathcal{X}})^{\perp}$. Consider the following pullback of $Y\rightarrow E$ and $X\rightarrow E$:
$$\xymatrix@C=20pt@R=20pt{ & 0\ar[d] & 0\ar[d] \\
 & Z \ar@{=}[r]^{} \ar[d]  &Z \ar[d]\\
0 \ar[r] & K \ar@{-->}[d] \ar@{-->}[r] & X \ar[r] \ar[d] & C \ar@{=}[d]  \ar[r] &0 \\
0 \ar[r] & Y \ar[r] \ar[d] & E \ar[r]^{} \ar[d] & C \ar[r] & 0\\
  & 0 & 0
  }$$
Since $Z\in (\mathcal{E}\text{-}{\rm dw}\widetilde{\mathcal{X}})^{\perp} \subseteq \widetilde{\mathcal{E}}$, we infer from the middle column that $X\in \widetilde{\mathcal{E}} \cap \mathcal{E}\text{-}{\rm dw}\widetilde{\mathcal{X}} = \mathcal{E}\text{-}{\rm ac}\widetilde{\mathcal{X}}$. Since $\mathcal{E}\text{-}{\rm dg}\widetilde{\mathcal{Y}} = (\widetilde{\mathcal{E}})^{\perp}$ and $\mathcal{E}\text{-}{\rm ac}\widetilde{\mathcal{X}} \subseteq \widetilde{\mathcal{E}}$, it follows that $Y\in \mathcal{E}\text{-}{\rm dg}\widetilde{\mathcal{Y}} \subseteq (\mathcal{E}\text{-}{\rm ac}\widetilde{\mathcal{X}})^{\perp}$. We infer from $\mathcal{E}\text{-}{\rm ac}\widetilde{\mathcal{X}}\subseteq \mathcal{E}\text{-}{\rm dw}\widetilde{\mathcal{X}}$ that
$Z\in (\mathcal{E}\text{-}{\rm dw}\widetilde{\mathcal{X}})^{\perp}\subseteq (\mathcal{E}\text{-}{\rm ac}\widetilde{\mathcal{X}})^{\perp}$. The left column then implies that $K\in (\mathcal{E}\text{-}{\rm ac}\widetilde{\mathcal{X}})^{\perp}$. Hence, for any complex $C$, we have constructed a short exact sequence $0\rightarrow K\rightarrow X\rightarrow C\rightarrow 0$ in ${\rm Ch}(\mathcal{A}, \mathcal{E})$, where $X\in \mathcal{E}\text{-}{\rm ac}\widetilde{\mathcal{X}}$ and $K\in (\mathcal{E}\text{-}{\rm ac}\widetilde{\mathcal{X}})^{\perp}$.

Furthermore, we will apply a standard argument (known as Salce's trick) to prove the another part. For any complex $C$, by Lemma \ref{lem:ccpofdg} there is a short exact sequence $0\rightarrow C\rightarrow Y\rightarrow L\rightarrow 0$ in ${\rm Ch}(\mathcal{A}, \mathcal{E})$, where $Y\in \widetilde{\mathcal{Y}}_\mathcal{E}$. For $L$ we have a short exact sequence $0\rightarrow K\rightarrow X\rightarrow L\rightarrow 0$ in ${\rm Ch}(\mathcal{A}, \mathcal{E})$, where $X\in \mathcal{E}\text{-}{\rm ac}\widetilde{\mathcal{X}}$ and $K\in (\mathcal{E}\text{-}{\rm ac}\widetilde{\mathcal{X}})^{\perp}$.  Consider the following pullback of $X\rightarrow L$ and $Y\rightarrow L$:

$$\xymatrix@C=20pt@R=20pt{ & & 0\ar[d] & 0\ar[d] \\
& & K \ar@{=}[r]^{} \ar[d]  &K \ar[d]\\
0 \ar[r] & C \ar@{=}[d] \ar[r] &D \ar@{-->}[r] \ar@{-->}[d] & X \ar[r] \ar[d] &0 \\
0 \ar[r] & C \ar[r] & Y \ar[r]^{} \ar[d] & L \ar[r] \ar[d]& 0\\
 & & 0 & 0
  }$$
Note that $Y\in \widetilde{\mathcal{Y}}_\mathcal{E} \subseteq \widehat{\mathcal{C}} = (\mathcal{E}\text{-}{\rm ac}\widetilde{\mathcal{X}})^{\perp}$ since the complexes in $\widetilde{\mathcal{Y}}_\mathcal{E}$ are contractible. Thus $0\rightarrow C\rightarrow D\rightarrow X\rightarrow 0$ is in $\mathcal{E}$ in each degree with $X\in \mathcal{E}\text{-}{\rm ac}\widetilde{\mathcal{X}}$ and $D\in (\mathcal{E}\text{-}{\rm ac}\widetilde{\mathcal{X}})^{\perp}$. This completes the proof.
\end{proof}

It is direct to check the following fact:

\begin{lemma}\label{lem:2Xe}

$\mathcal{E}\text{-}{\rm ac}\widetilde{\mathcal{X}} \cap (\mathcal{E}\text{-}{\rm ac}\widetilde{\mathcal{X}})^{\perp} = \widetilde{\mathcal{X}}_\mathcal{E}$.
\end{lemma}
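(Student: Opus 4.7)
The plan is to prove both inclusions by using contractibility as the pivotal property, exploiting the identification $\widehat{\mathcal{C}} = (\mathcal{E}\text{-}{\rm ac}\widetilde{\mathcal{X}})^{\perp}$ established in the proof of Proposition \ref{prop:(acX,+)}, where $\widehat{\mathcal{C}}$ denotes the class of complexes $C$ such that every chain map from a complex in $\mathcal{E}\text{-}{\rm ac}\widetilde{\mathcal{X}}$ to $C$ is null homotopic.

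For the inclusion $\widetilde{\mathcal{X}}_\mathcal{E} \subseteq \mathcal{E}\text{-}{\rm ac}\widetilde{\mathcal{X}} \cap (\mathcal{E}\text{-}{\rm ac}\widetilde{\mathcal{X}})^{\perp}$, I start with $X\in \widetilde{\mathcal{X}}_\mathcal{E}$ and first check that each short exact sequence $0\to {\rm Z}_nX \to X_n \to {\rm Z}_{n-1}X\to 0$ lies in $\mathcal{E}$: given $X'\in \mathcal{X}$ and $g\colon X'\to {\rm Z}_{n-1}X$, the acyclicity of ${\rm Hom}_{\mathcal{A}}(X', X)$ produces a preimage of $g$ in ${\rm Hom}_{\mathcal{A}}(X', X_n)$, yielding the required surjectivity. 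Because ${\rm Z}_{n-1}X\in \mathcal{X}$, applying ${\rm Hom}_{\mathcal{A}}({\rm Z}_{n-1}X,-)$ to this sequence lifts $1_{{\rm Z}_{n-1}X}$ to a splitting. Hence $X_n\cong {\rm Z}_nX\oplus {\rm Z}_{n-1}X$, which shows both that $X_n\in \mathcal{X}$, giving $X\in \mathcal{E}\text{-}{\rm ac}\widetilde{\mathcal{X}}$, and that $X$ decomposes as a direct sum of disc complexes and is therefore contractible. Contractibility means that precomposing any chain map into $X$ with the contracting homotopy exhibits it as null homotopic, so $X\in \widehat{\mathcal{C}} = (\mathcal{E}\text{-}{\rm ac}\widetilde{\mathcal{X}})^{\perp}$.

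For the reverse inclusion, let $X\in \mathcal{E}\text{-}{\rm ac}\widetilde{\mathcal{X}} \cap (\mathcal{E}\text{-}{\rm ac}\widetilde{\mathcal{X}})^{\perp}$. The key step is to apply the characterization $(\mathcal{E}\text{-}{\rm ac}\widetilde{\mathcal{X}})^{\perp} = \widehat{\mathcal{C}}$ to the identity map $1_X\colon X\to X$: since $X$ is itself in $\mathcal{E}\text{-}{\rm ac}\widetilde{\mathcal{X}}$, the identity must be null homotopic, so $X$ is contractible. Contractibility then splits each sequence $0\to {\rm Z}_nX \to X_n \to {\rm Z}_{n-1}X\to 0$, exhibiting ${\rm Z}_nX$ as a direct summand of $X_n\in \mathcal{X}$. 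Using that $\mathcal{X}$ is closed under direct summands (which holds in the standard setting of an admissible balanced pair, in particular when $(\mathcal{X},\mathcal{Y})$ arises from a cotorsion triple), we conclude ${\rm Z}_nX\in \mathcal{X}$, and combined with $X\in \widetilde{\mathcal{E}}$ this places $X$ in $\widetilde{\mathcal{X}}_\mathcal{E}$.

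The mild obstacle is ensuring the two uses of contractibility are clean: in one direction the splitting of the canonical short exact sequences is produced from the balanced-pair machinery and membership in $\mathcal{E}$, while in the other direction it is produced for free from null-homotopy of $1_X$. Invoking the explicit identification $\widehat{\mathcal{C}}=(\mathcal{E}\text{-}{\rm ac}\widetilde{\mathcal{X}})^{\perp}$ from Proposition \ref{prop:(acX,+)} bypasses any further computation with ${\rm Ext}^1_{{\rm Ch}(\mathcal{E})}$, so the argument is genuinely a direct verification and consistent with the remark preceding the statement.
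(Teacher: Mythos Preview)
Your proof is correct and is exactly the ``direct check'' the paper alludes to (the paper itself supplies no argument). Two small remarks: the identification $(\mathcal{E}\text{-}{\rm ac}\widetilde{\mathcal{X}})^{\perp}=\widehat{\mathcal{C}}$ you quote from the proof of Proposition~\ref{prop:(acX,+)} does not actually rely on that proposition's direct-sum hypothesis---it follows purely from the fact that each $X_n\in\mathcal{X}$ is projective in $(\mathcal{A},\mathcal{E})$ together with Lemma~\ref{lem:Gil04}---so invoking it for Lemma~\ref{lem:2Xe} (which has no such hypothesis) is legitimate; and your explicit flag that the reverse inclusion uses closure of $\mathcal{X}$ under direct summands is appropriate, since the paper does not state this but it holds in all the intended examples (cotorsion triples, Gorenstein projectives).
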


\begin{theorem}\label{thm:M4Sig}
If $(\mathcal{E}\text{-}{\rm dw}\widetilde{\mathcal{X}})^{\perp}$ is closed under direct sums, then there exists a hereditary model structure $\mathcal{M}_{ac\mathcal{X}} = (\mathcal{E}\text{-}{\rm ac}\widetilde{\mathcal{X}}, (\mathcal{E}\text{-}{\rm ac}\widetilde{\mathcal{X}})^{\perp}, {\rm Ch}(\mathcal{A}))$ on the exact category ${\rm Ch}(\mathcal{A}, \mathcal{E})$ with a triangle equivalence $${\rm Ho}(\mathcal{M}_{ac\mathcal{X}}) \simeq {\bf K}_{\mathcal{E}\text{-}{\rm ac}}(\mathcal{X}).$$
\end{theorem}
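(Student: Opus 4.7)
The approach is to present the triple $(\mathcal{E}\text{-}{\rm ac}\widetilde{\mathcal{X}}, (\mathcal{E}\text{-}{\rm ac}\widetilde{\mathcal{X}})^{\perp}, {\rm Ch}(\mathcal{A}))$ as a hereditary Hovey triple on the weakly idempotent complete exact category ${\rm Ch}(\mathcal{A}, \mathcal{E})$, apply Lemma \ref{lem:Gil3.3} to obtain the exact model structure $\mathcal{M}_{ac\mathcal{X}}$, and then identify its homotopy category using the standard equivalence between the homotopy category of a hereditary exact model structure and the stable category of its Frobenius cofibrant-fibrant part.

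For the Hovey triple, the two required complete cotorsion pairs are already in hand: Proposition \ref{prop:(acX,+)} supplies $(\mathcal{E}\text{-}{\rm ac}\widetilde{\mathcal{X}}, (\mathcal{E}\text{-}{\rm ac}\widetilde{\mathcal{X}})^{\perp})$, and Lemma \ref{lem:ccpofdg} supplies $(\widetilde{\mathcal{X}}_\mathcal{E}, {\rm Ch}(\mathcal{A}))$. The compatibility $\mathcal{E}\text{-}{\rm ac}\widetilde{\mathcal{X}} \cap (\mathcal{E}\text{-}{\rm ac}\widetilde{\mathcal{X}})^{\perp} = \widetilde{\mathcal{X}}_\mathcal{E}$ is Lemma \ref{lem:2Xe}, while $\widetilde{\mathcal{X}}_\mathcal{E}\subseteq\mathcal{E}\text{-}{\rm ac}\widetilde{\mathcal{X}}$ is immediate. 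By the standard criterion (due to Gillespie) that two complete cotorsion pairs sharing a common core assemble automatically into a Hovey triple with thick trivial class, one obtains $\mathcal{W}=(\mathcal{E}\text{-}{\rm ac}\widetilde{\mathcal{X}})^{\perp}$ thick and the desired triple. Heredity reduces to showing $\mathcal{E}\text{-}{\rm ac}\widetilde{\mathcal{X}}$ is closed under kernels of admissible epimorphisms in ${\rm Ch}(\mathcal{A}, \mathcal{E})$: given a short exact sequence $0\to A\to B\to C\to 0$ in ${\rm Ch}(\mathcal{E})$ with $B, C\in\mathcal{E}\text{-}{\rm ac}\widetilde{\mathcal{X}}$, the heredity of $\mathcal{M}_{dw\mathcal{X}}$ from Lemma \ref{lem:modelofdw} yields $A\in\mathcal{E}\text{-}{\rm dw}\widetilde{\mathcal{X}}$, while applying ${\rm Hom}_\mathcal{A}(X,-)$ for any $X\in\mathcal{X}$ produces a short exact sequence of complexes of abelian groups whose long exact sequence of homology forces $A\in\widetilde{\mathcal{E}}$.

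Lemma \ref{lem:Gil3.3} then delivers the hereditary exact model structure $\mathcal{M}_{ac\mathcal{X}}$. Its cofibrant-fibrant class is $\mathcal{E}\text{-}{\rm ac}\widetilde{\mathcal{X}}$, and by Lemma \ref{lem:2Xe} its class of projective-injective objects (for the induced Frobenius structure) is $\omega = \mathcal{E}\text{-}{\rm ac}\widetilde{\mathcal{X}} \cap (\mathcal{E}\text{-}{\rm ac}\widetilde{\mathcal{X}})^{\perp} = \widetilde{\mathcal{X}}_\mathcal{E}$. The standard Frobenius formula recalled in the preliminaries supplies a triangle equivalence ${\rm Ho}(\mathcal{M}_{ac\mathcal{X}}) \simeq \underline{\mathcal{E}\text{-}{\rm ac}\widetilde{\mathcal{X}}}/\widetilde{\mathcal{X}}_\mathcal{E}$.

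The main obstacle is the remaining identification $\underline{\mathcal{E}\text{-}{\rm ac}\widetilde{\mathcal{X}}}/\widetilde{\mathcal{X}}_\mathcal{E}\simeq {\bf K}_{\mathcal{E}\text{-}{\rm ac}}(\mathcal{X})$, which amounts to verifying that a chain map between objects of $\mathcal{E}\text{-}{\rm ac}\widetilde{\mathcal{X}}$ factors through an object of $\widetilde{\mathcal{X}}_\mathcal{E}$ if and only if it is null-homotopic. In one direction, any $Z\in\widetilde{\mathcal{X}}_\mathcal{E}$ is itself contractible: right $\mathcal{X}$-acyclicity of $Z$ applied with $X={\rm Z}_{n-1}Z\in\mathcal{X}$ makes ${\rm Hom}_\mathcal{A}({\rm Z}_{n-1}Z, Z)$ an acyclic complex of abelian groups, so the inclusion ${\rm Z}_{n-1}Z\hookrightarrow Z_{n-1}$ lifts through $d_n^Z$ to a section of $Z_n\twoheadrightarrow {\rm Z}_{n-1}Z$, splitting every cycle sequence and producing a contracting homotopy; hence any map factoring through $Z$ is null-homotopic. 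In the other direction, a null-homotopic $f\colon X\to Y$ factors through the mapping cone $C(1_X)$ of the identity via the canonical inclusion $X\hookrightarrow C(1_X)$, and $C(1_X)$ lies in $\widetilde{\mathcal{X}}_\mathcal{E}$ because it has entries $X_n\oplus X_{n-1}\in\mathcal{X}$ (closure of $\mathcal{X}$ under finite direct sums), cycles ${\rm Z}_n(C(1_X))\cong X_n\in\mathcal{X}$, and is contractible. Combining the two directions delivers the triangle equivalence ${\rm Ho}(\mathcal{M}_{ac\mathcal{X}}) \simeq {\bf K}_{\mathcal{E}\text{-}{\rm ac}}(\mathcal{X})$.
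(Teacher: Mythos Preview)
Your proposal is correct and follows the same overall strategy as the paper: assemble the Hovey triple from the two complete cotorsion pairs supplied by Proposition~\ref{prop:(acX,+)} and Lemma~\ref{lem:ccpofdg}, matched via Lemma~\ref{lem:2Xe}, and then read off the homotopy category from the Frobenius description of cofibrant--fibrant objects.

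The one substantive difference is how thickness of $(\mathcal{E}\text{-}{\rm ac}\widetilde{\mathcal{X}})^{\perp}$ is obtained. The paper proves it directly: granting heredity of the cotorsion pair, it remains to show that $(\mathcal{E}\text{-}{\rm ac}\widetilde{\mathcal{X}})^{\perp}$ is closed under kernels of admissible epimorphisms, and this is done by a pushout argument using a special $(\mathcal{E}\text{-}{\rm ac}\widetilde{\mathcal{X}})^{\perp}$-preenvelope together with Lemma~\ref{lem:2Xe}. You instead invoke Gillespie's two-cotorsion-pair criterion, which packages exactly that pushout argument. This is a legitimate shortcut, but two small points deserve care. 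First, Gillespie's criterion requires both cotorsion pairs to be \emph{hereditary} before it can be applied, so your heredity verification for $(\mathcal{E}\text{-}{\rm ac}\widetilde{\mathcal{X}}, (\mathcal{E}\text{-}{\rm ac}\widetilde{\mathcal{X}})^{\perp})$ should logically precede the invocation rather than follow it. Second, you should also note (it is immediate) that $(\widetilde{\mathcal{X}}_\mathcal{E}, {\rm Ch}(\mathcal{A}))$ is hereditary: $\widetilde{\mathcal{X}}_\mathcal{E}$ is precisely the class of projectives in ${\rm Ch}(\mathcal{A},\mathcal{E})$, and any admissible short exact sequence with projective middle and right terms splits. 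Your added detail on the identification $\mathcal{E}\text{-}{\rm ac}\widetilde{\mathcal{X}}/\widetilde{\mathcal{X}}_\mathcal{E}\simeq {\bf K}_{\mathcal{E}\text{-}{\rm ac}}(\mathcal{X})$ is more explicit than the paper's and is correct.
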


\begin{proof}
We claim that $(\mathcal{E}\text{-}{\rm ac}\widetilde{\mathcal{X}})^{\perp}$ is a thick subcategory. First, we note that the cotorsion pair $(\mathcal{E}\text{-}{\rm ac}\widetilde{\mathcal{X}}, (\mathcal{E}\text{-}{\rm ac}\widetilde{\mathcal{X}})^{\perp})$ is hereditary. It suffices to prove that $(\mathcal{E}\text{-}{\rm ac}\widetilde{\mathcal{X}})^{\perp}$ is closed under taking kernels of admissible epimorphisms. That is, for any short exact sequence $0\rightarrow A\rightarrow B\rightarrow C\rightarrow 0$ in ${\rm Ch}(\mathcal{A}, \mathcal{E})$ for which $B, C\in (\mathcal{E}\text{-}{\rm ac}\widetilde{\mathcal{X}})^{\perp}$, we need to show that $A\in (\mathcal{E}\text{-}{\rm ac}\widetilde{\mathcal{X}})^{\perp}$.

It follows from Lemma \ref{prop:(acX,+)} that there is a short exact sequence $0\rightarrow A\rightarrow K\rightarrow Y\rightarrow 0$ in ${\rm Ch}(\mathcal{A}, \mathcal{E})$ with $K\in (\mathcal{E}\text{-}{\rm ac}\widetilde{\mathcal{X}})^{\perp}$ and $Y\in \mathcal{E}\text{-}{\rm ac}\widetilde{\mathcal{X}}$. Consider the pushout of $A\rightarrow B$ and $A\rightarrow K$ we have the following commutative diagram
$$\xymatrix@C=20pt@R=20pt{ & 0\ar[d] & 0\ar[d] \\
0 \ar[r] & A \ar[r] \ar[d]  &B \ar[r] \ar[d] & C \ar[r] \ar@{=}[d]  & 0\\
0 \ar[r] & K \ar@{-->}[d] \ar@{-->}[r] & D \ar[r] \ar[d] & C  \ar[r] &0 \\
0 \ar[r] & Y \ar@{=}[r] \ar[d] & Y \ar[d] & & \\
  & 0 & 0
  }$$
with $B,D\in (\mathcal{E}\text{-}{\rm ac}\widetilde{\mathcal{X}})^{\perp}$, thus $Y\in (\mathcal{E}\text{-}{\rm ac}\widetilde{\mathcal{X}})^{\perp}\cap \mathcal{E}\text{-}{\rm ac}\widetilde{\mathcal{X}} = \widetilde{\mathcal{X}}_\mathcal{E}$, and then $0\rightarrow A\rightarrow K\rightarrow Y\rightarrow 0$ is split degreewise. Since $Y$ is contractible, $A\rightarrow K$ is homotopically equivalent, this proves the above claim.

Then, by Lemma \ref{lem:ccpofdg}, \ref{lem:2Xe} and Proposition \ref{prop:(acX,+)}, together with the correspondence stated in Lemma \ref{lem:Gil3.3}, the model structure $\mathcal{M}_{ac\mathcal{X}}$ follows. The class of cofibrant-fibrant objects of the model structure is precisely $\mathcal{E}\text{-}{\rm ac}\widetilde{\mathcal{X}}$. Then we get the equivalence ${\rm Ho}(\mathcal{M}_{ac\mathcal{X}}) \simeq \mathcal{E}\text{-}{\rm ac}\widetilde{\mathcal{X}}/ \widetilde{\mathcal{X}}_\mathcal{E} \simeq {\bf K}_{\mathcal{E}\text{-}{\rm ac}}(\mathcal{X}).$

\end{proof}

Dually, we obtain the model structure $\mathcal{M}_{ac\mathcal{Y}}$ as follow:
\begin{remark}\label{rem:Sig}
If ${}^{\perp}(\mathcal{E}\text{-}{\rm dw}\widetilde{\mathcal{Y}})$ is closed under direct products, then there is a model structure $\mathcal{M}_{ac\mathcal{Y}} = ({\rm Ch}(\mathcal{A}), {}^{\perp}(\mathcal{E}\text{-}{\rm ac}\widetilde{\mathcal{Y}}), \mathcal{E}\text{-}{\rm ac}\widetilde{\mathcal{Y}})$ on the exact category ${\rm Ch}(\mathcal{A}, \mathcal{E})$ with a triangle equivalence $${\rm Ho}(\mathcal{M}_{ac\mathcal{Y}}) \simeq {\bf K}_{\mathcal{E}\text{-}{\rm ac}}(\mathcal{Y}).$$
\end{remark}

\begin{corollary}\label{cor:sing}
Assume that $(\mathcal{E}\text{-}{\rm dw}\widetilde{\mathcal{X}})^{\perp}$ and ${}^{\perp}(\mathcal{E}\text{-}{\rm dw}\widetilde{\mathcal{Y}})$ closed under direct sums and direct product, respectively. If $(\mathcal{E}\text{-}{\rm ac}\widetilde{\mathcal{X}})^{\perp} = {}^{\perp}(\mathcal{E}\text{-}{\rm ac}\widetilde{\mathcal{Y}})$, then there is a triangle-equivalence $${\bf K}_{\mathcal{E}\text{-}{\rm ac}}(\mathcal{X}) \simeq {\bf K}_{\mathcal{E}\text{-}{\rm ac}}(\mathcal{Y}).$$

\end{corollary}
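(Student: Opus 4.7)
The plan is to realize both sides as the homotopy category of a Hovey triple on the same exact category ${\rm Ch}(\mathcal{A},\mathcal{E})$, and to reduce the equivalence to the principle that such model structures are determined, up to weak equivalence, by their class of trivial objects. First, Theorem \ref{thm:M4Sig} and Remark \ref{rem:Sig} apply under the hypotheses, yielding the two hereditary model structures
$$\mathcal{M}_{ac\mathcal{X}} = (\mathcal{E}\text{-}{\rm ac}\widetilde{\mathcal{X}},\ (\mathcal{E}\text{-}{\rm ac}\widetilde{\mathcal{X}})^{\perp},\ {\rm Ch}(\mathcal{A})) \quad \text{and} \quad \mathcal{M}_{ac\mathcal{Y}} = ({\rm Ch}(\mathcal{A}),\ {}^{\perp}(\mathcal{E}\text{-}{\rm ac}\widetilde{\mathcal{Y}}),\ \mathcal{E}\text{-}{\rm ac}\widetilde{\mathcal{Y}})$$
on ${\rm Ch}(\mathcal{A},\mathcal{E})$, whose homotopy categories are triangle-equivalent to ${\bf K}_{\mathcal{E}\text{-}{\rm ac}}(\mathcal{X})$ and ${\bf K}_{\mathcal{E}\text{-}{\rm ac}}(\mathcal{Y})$ respectively. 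The hypothesis says precisely that the middle (trivial) classes coincide; denote this common class by $\mathcal{W}$.

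Next I would invoke the standard principle from the theory of exact Hovey triples (compare \cite{Gil11,Hov02}): given two hereditary Hovey triples $(\mathcal{C}_1,\mathcal{W},\mathcal{F}_1)$ and $(\mathcal{C}_2,\mathcal{W},\mathcal{F}_2)$ on the same weakly idempotent complete exact category, they share the same class of weak equivalences. The reason is that in either triple a morphism is a weak equivalence precisely when it factors as an admissible monomorphism with cokernel in $\mathcal{W}$ followed by an admissible epimorphism with kernel in $\mathcal{W}$; the forward direction is immediate from Lemma \ref{lem:Gil3.3} since $\mathcal{C}_i\cap\mathcal{W}\subseteq\mathcal{W}$ and $\mathcal{W}\cap\mathcal{F}_i\subseteq\mathcal{W}$, while the converse follows by refining any such factorization through the completeness of the cotorsion pairs $(\mathcal{C}_i,\mathcal{W}\cap\mathcal{F}_i)$ and $(\mathcal{C}_i\cap\mathcal{W},\mathcal{F}_i)$ together with the thickness of $\mathcal{W}$.

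Applying this, $\mathcal{M}_{ac\mathcal{X}}$ and $\mathcal{M}_{ac\mathcal{Y}}$ have the same weak equivalences, so their Gabriel--Zisman localizations coincide as categories; since the triangulated structure on the homotopy category of a hereditary exact Hovey triple is intrinsic to the localization at weak equivalences, the resulting identification is automatically a triangle equivalence. Composing with the triangle equivalences supplied by Theorem \ref{thm:M4Sig} and Remark \ref{rem:Sig} yields the desired equivalence ${\bf K}_{\mathcal{E}\text{-}{\rm ac}}(\mathcal{X})\simeq {\bf K}_{\mathcal{E}\text{-}{\rm ac}}(\mathcal{Y})$. The main obstacle is this invariance of weak equivalences under the choice of Hovey triple with fixed $\mathcal{W}$; if one prefers to avoid citing it, a concrete alternative is to send $X\in \mathcal{E}\text{-}{\rm ac}\widetilde{\mathcal{X}}$ to the middle term $Y$ of a fibrant replacement $0\to X\to Y\to W\to 0$ in $\mathcal{M}_{ac\mathcal{Y}}$ (so $Y\in \mathcal{E}\text{-}{\rm ac}\widetilde{\mathcal{Y}}$ and $W\in\mathcal{W}$), construct a quasi-inverse by cofibrant replacement in $\mathcal{M}_{ac\mathcal{X}}$, and verify well-definedness on the stable categories $\mathcal{E}\text{-}{\rm ac}\widetilde{\mathcal{X}}/\widetilde{\mathcal{X}}_\mathcal{E}\simeq {\bf K}_{\mathcal{E}\text{-}{\rm ac}}(\mathcal{X})$ and $\mathcal{E}\text{-}{\rm ac}\widetilde{\mathcal{Y}}/\widetilde{\mathcal{Y}}_\mathcal{E}\simeq {\bf K}_{\mathcal{E}\text{-}{\rm ac}}(\mathcal{Y})$ via a diagram chase using Lemma \ref{lem:2Xe} and its dual.
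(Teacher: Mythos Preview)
Your proposal is correct and follows essentially the same route as the paper: both arguments invoke Theorem \ref{thm:M4Sig} and Remark \ref{rem:Sig} to obtain the two hereditary Hovey triples on ${\rm Ch}(\mathcal{A},\mathcal{E})$ with a common class of trivial objects, and then appeal to the general principle that such model structures have equivalent homotopy categories. The only cosmetic difference is that the paper packages this last step as a Quillen equivalence via \cite[Corollary 1.4]{GLZ24}, whereas you spell it out as ``same $\mathcal{W}$ $\Rightarrow$ same weak equivalences $\Rightarrow$ same localization''; these are two phrasings of the same fact.
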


\begin{proof}
Under the assumption, together with Theorem \ref{thm:M4Sig} and Remark \ref{rem:Sig}, we obtain model structures $\mathcal{M}_{ac\mathcal{X}}$ and $\mathcal{M}_{ac\mathcal{Y}}$ with common trivial objects. It follows from \cite[Corollary 1.4]{GLZ24} that there is a Quillen equivalence between the model categories $\mathcal{M}_{ac\mathcal{X}}$ and $\mathcal{M}_{ac\mathcal{Y}}$, which yields an equivalence of the corresponding homotopy categories ${\bf K}_{\mathcal{E}\text{-}{\rm ac}}(\mathcal{X}) \simeq {\rm Ho}(\mathcal{M}_{ac\mathcal{X}}) \simeq {\rm Ho}(\mathcal{M}_{ac\mathcal{Y}})\simeq {\bf K}_{\mathcal{E}\text{-}{\rm ac}}(\mathcal{Y})$ and completes the proof.
\end{proof}

\section{Applications}\label{applications}

Throughout this section, let $\mathcal{A}$ still be a complete abelian category which satisfies {\em AB5} with an admissible balanced pair $(\mathcal{X}, \mathcal{Y})$. The class of short exact sequences $\mathcal{E}$ given by right $\mathcal{X}$-acyclic as mentioned. We assume that $(\mathcal{E}\text{-}{\rm dw}\widetilde{\mathcal{X}})^{\perp}$ closed under direct sums and ${}^{\perp}(\mathcal{E}\text{-}{\rm dw}\widetilde{\mathcal{Y}})$ is closed under direct products on the exact category ${\rm Ch}(\mathcal{A}, \mathcal{E})$.

We recall the definition of recollement of triangulated categories, see \cite{BBD82}.

\begin{definition}
Let $\mathcal{T}_1$, $\mathcal{T}$ and $\mathcal{T}_2$ be
triangulated categories. A {\it recollement} of $\mathcal{T}$
relative to $\mathcal{T}_1$ and $\mathcal{T}_2$ is given by
$$\xymatrix@!=4pc{ \mathcal{T}_1 \ar[r]^{i_*=i_!} & \mathcal{T} \ar@<-3ex>[l]_{i^*}
\ar@<+3ex>[l]_{i^!} \ar[r]^{j^!=j^*} & \mathcal{T}_2
\ar@<-3ex>[l]_{j_!} \ar@<+3ex>[l]_{j_*}}$$ such that

(R1) $(i^*,i_*), (i_!,i^!), (j_!,j^!)$ and $(j^*,j_*)$ are adjoint
pairs of triangle functors;

(R2) $i_*$, $j_!$ and $j_*$ are full embeddings;

(R3) $j^!i_*=0$ (and thus also $i^!j_*=0$ and $i^*j_!=0$);

(R4) for each $X \in \mathcal {T}$, there are triangles

$$\begin{array}{l} j_!j^!X \rightarrow X  \rightarrow i_*i^*X  \rightarrow
\\ i_!i^!X \rightarrow X  \rightarrow j_*j^*X  \rightarrow
\end{array}$$ where the arrows to and from $X$ are the counits and the
units of the adjoint pairs respectively.

\end{definition}

Gillespie have obtained the following method to construct recollements.

\begin{lemma}\cite[Theorem 8.3]{Gil16-2}
Let $\mathcal{A}$ be an abelian category with three hereditary model structures
$$\mathcal{M}_1=(\mathcal{Q}_1,\mathcal{W}_1,\mathcal{R}),\ \ \mathcal{M}_2=(\mathcal{Q}_2,\mathcal{W}_2,\mathcal{R}),\ \ \mathcal{M}_3=(\mathcal{Q}_,\mathcal{W}_3,\mathcal{R})$$
with cores all coincide and $\mathcal{W}_3\cap \mathcal{Q}_1=\mathcal{Q}_2$ and $\mathcal{Q}_3\subseteq\mathcal{Q}_1$, then the sequence $${\rm Ho}(\mathcal{M}_2)\rightarrow{\rm Ho}(\mathcal{M}_1)\rightarrow{\rm Ho}(\mathcal{M}_3)$$
induces a recollement:
$$\xymatrix@!=4pc{ {\rm Ho}(\mathcal{M}_2) \ar[r] & {\rm Ho}(\mathcal{M}_1) \ar@<-2ex>[l]
\ar@<+2ex>[l] \ar[r] & {\rm Ho}(\mathcal{M}_3)
\ar@<-2ex>[l] \ar@<+2ex>[l]}$$
\end{lemma}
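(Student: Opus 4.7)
The plan is to identify each ${\rm Ho}(\mathcal{M}_i)$ with the Frobenius stable category $(\mathcal{Q}_i\cap\mathcal{R})/\omega$, where $\omega:=\mathcal{Q}_i\cap\mathcal{W}_i\cap\mathcal{R}$ denotes the common core; this is the standard triangle equivalence attached to a hereditary Hovey triple (see \cite[Theorem 1.3]{GLZ24} or \cite[Proposition 4.4]{Gil11}) and reduces the entire problem to comparing three Frobenius subcategories of $\mathcal{A}$ that share the same class $\omega$ of projective-injective objects. Because $\mathcal{Q}_2\subseteq\mathcal{Q}_1$ (from $\mathcal{Q}_2=\mathcal{W}_3\cap\mathcal{Q}_1$) and $\mathcal{Q}_3\subseteq\mathcal{Q}_1$ (by hypothesis), the inclusions $\mathcal{Q}_2\cap\mathcal{R}\hookrightarrow\mathcal{Q}_1\cap\mathcal{R}$ and $\mathcal{Q}_3\cap\mathcal{R}\hookrightarrow\mathcal{Q}_1\cap\mathcal{R}$ descend to fully faithful exact functors $i_*\colon{\rm Ho}(\mathcal{M}_2)\to{\rm Ho}(\mathcal{M}_1)$ and $j_!\colon{\rm Ho}(\mathcal{M}_3)\to{\rm Ho}(\mathcal{M}_1)$; full faithfulness is automatic since morphisms are computed modulo the same $\omega$ in every case.

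The central technical input is an approximation argument. Given $Q\in\mathcal{Q}_1\cap\mathcal{R}$, completeness of the cotorsion pair $(\mathcal{Q}_3,\mathcal{W}_3\cap\mathcal{R})$ yields a short exact sequence
\[0\to W\to Q'\to Q\to 0,\qquad Q'\in\mathcal{Q}_3,\ W\in\mathcal{W}_3\cap\mathcal{R}.\]
Heredity of $(\mathcal{Q}_1,\mathcal{W}_1\cap\mathcal{R})$ combined with $\mathcal{Q}_3\subseteq\mathcal{Q}_1$ forces $W\in\mathcal{Q}_1$, so $W\in\mathcal{W}_3\cap\mathcal{Q}_1\cap\mathcal{R}=\mathcal{Q}_2\cap\mathcal{R}$, while extension-closure of $\mathcal{R}$ gives $Q'\in\mathcal{R}$. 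The associated triangle $W\to Q'\to Q\to\Sigma W$ in ${\rm Ho}(\mathcal{M}_1)$, with $Q'\in\mathcal{Q}_3\cap\mathcal{R}$ and $W\in\mathcal{Q}_2\cap\mathcal{R}$, is the first recollement triangle $j_!j^!Q\to Q\to i_*i^*Q\to$ and supplies the definitions $j^!(Q):=Q'$ and $i^*(Q):=\Sigma W$. A dual argument, based on the completeness of the cotorsion pair $(\mathcal{Q}_2\cap\mathcal{W}_2,\mathcal{R})$ together with the same heredity and extension-closure arguments, produces an analogous short exact sequence whose associated triangle serves as the second recollement triangle $i_!i^!Q\to Q\to j_*j^*Q\to$ and defines the remaining pair of functors $i^!$ and $j_*$.

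Adjointness of each of the pairs $(i^*,i_*)$, $(i_!,i^!)$, $(j_!,j^!)$, $(j^*,j_*)$ then follows by a Hom computation in the stable category, using the ${\rm Ext}^1$-vanishing built into the relevant cotorsion pairs. Axiom (R3), the vanishing $j^!i_*=0$, reduces to the defining equation $\mathcal{Q}_2=\mathcal{W}_3\cap\mathcal{Q}_1$: an object in $\mathcal{Q}_2\cap\mathcal{R}$ is $\mathcal{M}_3$-trivial and hence zero in ${\rm Ho}(\mathcal{M}_3)$, and the other vanishings in (R3) follow formally from the adjunctions. Full faithfulness of $i_*$, $j_!$, $j_*$ in (R2) is immediate from the coincidence of cores, and (R4) is precisely the pair of triangles constructed above. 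The main obstacle is to promote the object-level assignments $Q\mapsto Q'$, $Q\mapsto W$ and their duals to genuine triangle functors on the homotopy categories: cotorsion pair approximations are unique only up to homotopy through $\omega$, so one must verify via a standard lifting argument (any two approximations differ by a map that factors through an object of $\omega$) that these assignments descend to well-defined natural functors on the Frobenius stable quotients, after which the naturality of the short exact sequences delivers the full recollement.
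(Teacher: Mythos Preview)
The paper does not prove this lemma at all; it is simply quoted as \cite[Theorem 8.3]{Gil16-2} and used as a black box to deduce Corollary~\ref{cor:recollement}. So there is no ``paper's own proof'' to compare against --- any argument you supply is necessarily your own reconstruction of Gillespie's result.

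That said, your outline follows the standard strategy and is largely sound: pass to the Frobenius stable categories $(\mathcal{Q}_i\cap\mathcal{R})/\omega$, use the inclusions $\mathcal{Q}_2,\mathcal{Q}_3\subseteq\mathcal{Q}_1$ to get the fully faithful embeddings, and build the first recollement triangle from a $\mathcal{Q}_3$-approximation. The one concrete slip is in your ``dual argument'': the cotorsion pair $(\mathcal{Q}_2\cap\mathcal{W}_2,\mathcal{R})$ is the wrong choice. For $Q\in\mathcal{Q}_1\cap\mathcal{R}$, an approximation $0\to K\to P\to Q\to 0$ with $P\in\mathcal{Q}_2\cap\mathcal{W}_2$ forces $P\in\mathcal{R}$ by extension closure, hence $P\in\omega$ and the resulting triangle is trivial in the stable category. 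You presumably meant $(\mathcal{Q}_2,\mathcal{W}_2\cap\mathcal{R})$, which does give $0\to V\to P\to Q\to 0$ with $P\in\mathcal{Q}_2\cap\mathcal{R}$ and $V\in\mathcal{W}_2\cap\mathcal{Q}_1\cap\mathcal{R}$ (using heredity for $V\in\mathcal{Q}_1$). Even then, one still has to argue that $(\mathcal{W}_2\cap\mathcal{Q}_1\cap\mathcal{R})/\omega$ is the essential image of $j_*$ --- equivalently, that it is the right orthogonal of $(\mathcal{Q}_2\cap\mathcal{R})/\omega$ inside $(\mathcal{Q}_1\cap\mathcal{R})/\omega$ and is equivalent to $\mathrm{Ho}(\mathcal{M}_3)$. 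This identification is the genuine content of the second half of Gillespie's argument and is not a formal dual of the first construction; your sketch should flag it as a separate step rather than fold it into ``a dual argument''.
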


Combining Theorem \ref{thm:M4Sig}, Lemma \ref{lem:modelofdg} and Lemma \ref{lem:modelofdw}, we get three hereditary model structures on ${\rm Ch}(\mathcal{A}, \mathcal{E})$ as follow: $$\mathcal{M}_1=\mathcal{M}_{dw\mathcal{X}} = (\mathcal{E}\text{-}{\rm dw}\widetilde{\mathcal{X}}, (\mathcal{E}\text{-}{\rm dw}\widetilde{\mathcal{X}})^{\perp}, {\rm Ch}(\mathcal{A})),$$ $$\mathcal{M}_2=\mathcal{M}_{ac\mathcal{X}} = (\mathcal{E}\text{-}{\rm ac}\widetilde{\mathcal{X}}, (\mathcal{E}\text{-}{\rm ac}\widetilde{\mathcal{X}})^{\perp}, {\rm Ch}(\mathcal{A})),$$  $$\mathcal{M}_3=\mathcal{M}_{dg\mathcal{X}} = (\mathcal{E}\text{-dg}\widetilde{\mathcal{X}}, \widetilde{\mathcal{E}}, {\rm Ch}(\mathcal{A})),$$ whose cores are both $\widetilde{\mathcal{X}}_\mathcal{E}$. Since $\widetilde{\mathcal{E}} \cap  \mathcal{E}\text{-}{\rm dw}\widetilde{\mathcal{X}}= \mathcal{E}\text{-}{\rm ac}\widetilde{\mathcal{X}}$ and $\mathcal{E}\text{-dg}\widetilde{\mathcal{X}}\subseteq \mathcal{E}\text{-dw}\widetilde{\mathcal{X}}$, we have the following relative type of the Krause's recollement, compare to \cite[Theorem 7.7]{S14}.

\begin{corollary} \label{cor:recollement}
Let ${\bf K}_{\mathcal{E}\text{-}ac}(\mathcal{X})$, ${\bf K}(\mathcal{X})$ and ${\bf D}_\mathcal{X}(\mathcal{A})$ as mentioned above, there is an induced recollement:
$$\xymatrix@!=4pc{ {\bf K}_{\mathcal{E}\text{-}ac}(\mathcal{X}) \ar[r] & {\bf K}(\mathcal{X}) \ar@<-2ex>[l]
\ar@<+2ex>[l] \ar[r] & {\bf D}_\mathcal{X}(\mathcal{A})
\ar@<-2ex>[l] \ar@<+2ex>[l]}$$

\end{corollary}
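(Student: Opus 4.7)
The plan is to apply Gillespie's recollement construction (the unnumbered lemma quoted just above the corollary, from \cite[Theorem 8.3]{Gil16-2}) to the three hereditary Hovey triples
$$\mathcal{M}_1 = \mathcal{M}_{dw\mathcal{X}}, \quad \mathcal{M}_2 = \mathcal{M}_{ac\mathcal{X}}, \quad \mathcal{M}_3 = \mathcal{M}_{dg\mathcal{X}}$$
on the exact category ${\rm Ch}(\mathcal{A}, \mathcal{E})$, whose existence and homotopy category descriptions are furnished by Lemma \ref{lem:modelofdw}, Theorem \ref{thm:M4Sig}, and Lemma \ref{lem:modelofdg} respectively. Essentially all the ingredients have been arranged in the preceding text; the only task is to confirm the three hypotheses of the lemma and then read off the recollement.

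First I would verify that the cores of the three model structures coincide with $\widetilde{\mathcal{X}}_\mathcal{E}$. For $\mathcal{M}_{ac\mathcal{X}}$ this is exactly Lemma \ref{lem:2Xe}. For $\mathcal{M}_{dg\mathcal{X}}$, a complex in $\mathcal{E}\text{-}{\rm dg}\widetilde{\mathcal{X}}\cap \widetilde{\mathcal{E}}$ admits a null-homotopic identity (by Definition \ref{def:Com1}(4) applied to itself), hence is contractible with terms in $\mathcal{X}$, forcing its cycles to be summands in $\mathcal{X}$. For $\mathcal{M}_{dw\mathcal{X}}$, one runs an argument parallel to the proof of Proposition \ref{prop:(acX,+)}/Lemma \ref{lem:2Xe} to see that $\mathcal{E}\text{-}{\rm dw}\widetilde{\mathcal{X}}\cap (\mathcal{E}\text{-}{\rm dw}\widetilde{\mathcal{X}})^{\perp} = \widetilde{\mathcal{X}}_\mathcal{E}$.

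Next I would check the two compatibility conditions. The identity $\mathcal{W}_3\cap \mathcal{Q}_1 = \widetilde{\mathcal{E}}\cap \mathcal{E}\text{-}{\rm dw}\widetilde{\mathcal{X}} = \mathcal{E}\text{-}{\rm ac}\widetilde{\mathcal{X}} = \mathcal{Q}_2$ is literally part (5) of Definition \ref{def:Com1}, and the inclusion $\mathcal{Q}_3\subseteq \mathcal{Q}_1$, i.e.\ $\mathcal{E}\text{-}{\rm dg}\widetilde{\mathcal{X}}\subseteq \mathcal{E}\text{-}{\rm dw}\widetilde{\mathcal{X}}$, is part of Definition \ref{def:Com1}(4). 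Both are already noted in the paragraph preceding the corollary.

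With the hypotheses confirmed, Gillespie's lemma outputs a recollement among the three homotopy categories, which I would then transport through the triangle equivalences ${\rm Ho}(\mathcal{M}_{ac\mathcal{X}})\simeq {\bf K}_{\mathcal{E}\text{-}{\rm ac}}(\mathcal{X})$, ${\rm Ho}(\mathcal{M}_{dw\mathcal{X}})\simeq {\bf K}(\mathcal{X})$, and ${\rm Ho}(\mathcal{M}_{dg\mathcal{X}})\simeq {\bf D}_\mathcal{X}(\mathcal{A})$ to obtain the stated recollement. The main (mild) obstacle is the core calculation for $\mathcal{M}_{dw\mathcal{X}}$, since that case has not been proved explicitly in the text; however the argument is a direct adaptation of the proof of Proposition \ref{prop:(acX,+)}, using the pullback construction together with Lemmas \ref{lem:SnY} and \ref{lem:ccpofdg}, and presents no new difficulty.
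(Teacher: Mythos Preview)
Your proposal is correct and follows essentially the same approach as the paper: the paper's argument is exactly the paragraph preceding the corollary, which applies Gillespie's lemma to the three Hovey triples $\mathcal{M}_{dw\mathcal{X}}$, $\mathcal{M}_{ac\mathcal{X}}$, $\mathcal{M}_{dg\mathcal{X}}$ after asserting that their cores all equal $\widetilde{\mathcal{X}}_\mathcal{E}$ and noting the two compatibility conditions $\widetilde{\mathcal{E}}\cap\mathcal{E}\text{-}{\rm dw}\widetilde{\mathcal{X}}=\mathcal{E}\text{-}{\rm ac}\widetilde{\mathcal{X}}$ and $\mathcal{E}\text{-}{\rm dg}\widetilde{\mathcal{X}}\subseteq\mathcal{E}\text{-}{\rm dw}\widetilde{\mathcal{X}}$. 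You are slightly more explicit than the paper about the core computations (the paper simply asserts them, implicitly relying on \cite{HRYY25}), but the strategy is identical.
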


Dually, together with \cite[Theorem 8.2]{Gil16-2} we obtain the following relative type of Neeman-Murfet's recollement, compare to \cite{Mur07,Nee08}:

\begin{corollary} \label{cor:recollement2}
Let ${\bf K}_{\mathcal{E}\text{-}ac}(\mathcal{Y})$, ${\bf K}(\mathcal{Y})$ and ${\bf D}_\mathcal{Y}(\mathcal{A})$ as mentioned above, there is an induced recollement:
$$\xymatrix@!=4pc{ {\bf K}_{\mathcal{E}\text{-}ac}(\mathcal{Y}) \ar[r] & {\bf K}(\mathcal{Y}) \ar@<-2ex>[l]
\ar@<+2ex>[l] \ar[r] & {\bf D}_\mathcal{Y}(\mathcal{A})
\ar@<-2ex>[l] \ar@<+2ex>[l]}$$

\end{corollary}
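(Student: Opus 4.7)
The plan is to dualize the proof of Corollary \ref{cor:recollement} verbatim, replacing the role of $\mathcal{X}$-type structures (which have a varying class of cofibrant objects and a common fibrant class ${\rm Ch}(\mathcal{A})$) by $\mathcal{Y}$-type structures (which have a common cofibrant class ${\rm Ch}(\mathcal{A})$ and a varying fibrant class). The recollement will then be produced by the dual form of the gluing theorem, namely \cite[Theorem 8.2]{Gil16-2}, which is the mirror of \cite[Theorem 8.3]{Gil16-2} used above.

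Concretely, I would assemble the following three hereditary model structures on ${\rm Ch}(\mathcal{A}, \mathcal{E})$, which are available under the standing hypothesis that ${}^{\perp}(\mathcal{E}\text{-}{\rm dw}\widetilde{\mathcal{Y}})$ is closed under direct products. From Lemma \ref{lem:modelofdw} we take
$$\mathcal{M}_1 = \mathcal{M}_{dw\mathcal{Y}} = ({\rm Ch}(\mathcal{A}),\ {}^{\perp}(\mathcal{E}\text{-}{\rm dw}\widetilde{\mathcal{Y}}),\ \mathcal{E}\text{-}{\rm dw}\widetilde{\mathcal{Y}}),$$
from Remark \ref{rem:Sig} we take
$$\mathcal{M}_2 = \mathcal{M}_{ac\mathcal{Y}} = ({\rm Ch}(\mathcal{A}),\ {}^{\perp}(\mathcal{E}\text{-}{\rm ac}\widetilde{\mathcal{Y}}),\ \mathcal{E}\text{-}{\rm ac}\widetilde{\mathcal{Y}}),$$
and from Lemma \ref{lem:modelofdg} we take
$$\mathcal{M}_3 = \mathcal{M}_{dg\mathcal{Y}} = ({\rm Ch}(\mathcal{A}),\ \widetilde{\mathcal{E}},\ \mathcal{E}\text{-}{\rm dg}\widetilde{\mathcal{Y}}).$$
Their respective homotopy categories are ${\bf K}(\mathcal{Y})$, ${\bf K}_{\mathcal{E}\text{-}{\rm ac}}(\mathcal{Y})$ and ${\bf D}_\mathcal{Y}(\mathcal{A})$.

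Next I would verify the three compatibility hypotheses required by \cite[Theorem 8.2]{Gil16-2}. First, the cores must coincide: the core of $\mathcal{M}_i$ is ${\rm Ch}(\mathcal{A}) \cap \mathcal{W}_i \cap \mathcal{R}_i$, and the dual of Lemma \ref{lem:2Xe} (which follows by the same direct check from Definition \ref{def:Com1}) gives that all three equal $\widetilde{\mathcal{Y}}_\mathcal{E}$. Second, the equality of fibrant-trivial parts $\widetilde{\mathcal{E}} \cap \mathcal{E}\text{-}{\rm dw}\widetilde{\mathcal{Y}} = \mathcal{E}\text{-}{\rm ac}\widetilde{\mathcal{Y}}$ is immediate from the definition of $\mathcal{E}\text{-}{\rm ac}\widetilde{\mathcal{Y}}$ in Definition \ref{def:Com1}. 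Third, the inclusion $\mathcal{E}\text{-}{\rm dg}\widetilde{\mathcal{Y}} \subseteq \mathcal{E}\text{-}{\rm dw}\widetilde{\mathcal{Y}}$ of fibrant objects is built into Definition \ref{def:Com1} as well. All three conditions being mere dualizations of what was already checked in the proof of Corollary \ref{cor:recollement}, no new computation is needed.

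I expect no genuine obstacle here: the content is entirely formal once the three model structures are in place. The only point worth flagging is that one must invoke the correct handed version of Gillespie's gluing lemma, since the model structures $\mathcal{M}_1, \mathcal{M}_2, \mathcal{M}_3$ share the \emph{cofibrant} class ${\rm Ch}(\mathcal{A})$ rather than the fibrant class; this is precisely the setting of \cite[Theorem 8.2]{Gil16-2}, and applying it to the triple $(\mathcal{M}_1, \mathcal{M}_2, \mathcal{M}_3)$ above produces the claimed recollement
$$\xymatrix@!=4pc{ {\bf K}_{\mathcal{E}\text{-}ac}(\mathcal{Y}) \ar[r] & {\bf K}(\mathcal{Y}) \ar@<-2ex>[l] \ar@<+2ex>[l] \ar[r] & {\bf D}_\mathcal{Y}(\mathcal{A}). \ar@<-2ex>[l] \ar@<+2ex>[l]}$$
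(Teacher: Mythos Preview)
Your proposal is correct and matches the paper's approach exactly: the paper simply says ``Dually, together with \cite[Theorem 8.2]{Gil16-2}'' and states the corollary, leaving the reader to assemble the three $\mathcal{Y}$-type model structures $\mathcal{M}_{dw\mathcal{Y}}$, $\mathcal{M}_{ac\mathcal{Y}}$, $\mathcal{M}_{dg\mathcal{Y}}$ and check the hypotheses of the dual gluing theorem. You have written out precisely those details, including the key point that one must invoke Theorem 8.2 rather than Theorem 8.3 because the common class is now the cofibrant one.
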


Let $R$ be an associative ring with identity. Recall a left $R$-modules $M$ is {\em Gorenstein projective} if $M \cong {\rm Z}_0 C$ for some totally $\mathcal{P}$-acyclic complex $C$, that is, $C$ is both right and left $\mathcal{P}$-acyclic with each item belongs to $\mathcal{P}$. Similarly, {\em Gorenstein injective} modules are defined. Denoted by $\mathcal{GP}$ (resp. $\mathcal{GI}$) the subcategory which consisting of all Gorenstein projective (resp. injective) modules over $R$, see \cite{EJ95} for details.

Recall that the {\em Gorenstein weak dimension} of $R$ is defined as to be the supremum of Gorenstein flat dimension of all left $R$-modules. Throughout this section, let $R$ be with finite Gorenstein weak dimension, and $\mathcal{A}$ be the category of left $R$-modules, it follows from \cite[Lemma 5.6 and Lemma 5.7]{HRYY25} that $(\mathcal{GP}, \mathcal{GI})$ is an admissible balanced pair and $(\mathcal{E}\text{-}{\rm dw}\widetilde{\mathcal{GP}})^{\perp} = {}^{\perp}(\mathcal{E}\text{-}{\rm dw}\widetilde{\mathcal{GI}})$. Together with Corollary \ref{cor:recollement} and \ref{cor:recollement2}, we get the following result.

\begin{corollary} \label{cor:recollement3}
Let $R$ be a ring with finite Gorenstein weak dimension, then we have recollements:
$$\xymatrix@!=4pc{ {\bf K}_{\mathcal{E}\text{-}ac}(\mathcal{GP}) \ar[r] & {\bf K}(\mathcal{GP}) \ar@<-2ex>[l]
\ar@<+2ex>[l] \ar[r] & {\bf D}_\mathcal{GP}(\mathcal{A})
\ar@<-2ex>[l] \ar@<+2ex>[l]}$$
and
$$\xymatrix@!=4pc{ {\bf K}_{\mathcal{E}\text{-}ac}(\mathcal{GI}) \ar[r] & {\bf K}(\mathcal{GI}) \ar@<-2ex>[l]
\ar@<+2ex>[l] \ar[r] & {\bf D}_\mathcal{GI}(\mathcal{A})
\ar@<-2ex>[l] \ar@<+2ex>[l]}$$
\end{corollary}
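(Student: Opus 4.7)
The plan is to verify the hypotheses of Corollaries \ref{cor:recollement} and \ref{cor:recollement2} for the pair $(\mathcal{X},\mathcal{Y}) = (\mathcal{GP},\mathcal{GI})$ in the abelian category $\mathcal{A}$ of left $R$-modules, and then invoke those two corollaries directly. Since $\mathcal{A}$ is complete and satisfies AB5, the general setup of Section \ref{section3} is available as soon as an admissible balanced pair is in hand. Under the assumption that the Gorenstein weak dimension of $R$ is finite, \cite[Lemma 5.6]{HRYY25} (which ultimately uses \cite[Theorem 4.2]{WE24}) tells us precisely that $(\mathcal{GP},\mathcal{GI})$ is such a pair, so the class $\mathcal{E}$ of right $\mathcal{GP}$-acyclic (equivalently, by Lemma \ref{lem:XY-ac}, left $\mathcal{GI}$-acyclic) short exact sequences is defined and the exact category ${\rm Ch}(\mathcal{A},\mathcal{E})$ is at our disposal.

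The second and decisive step is to check the two closure conditions that are built into the standing hypotheses of Section \ref{applications}: Corollary \ref{cor:recollement} requires $(\mathcal{E}\text{-}{\rm dw}\widetilde{\mathcal{GP}})^{\perp}$ to be closed under direct sums, and Corollary \ref{cor:recollement2} requires ${}^{\perp}(\mathcal{E}\text{-}{\rm dw}\widetilde{\mathcal{GI}})$ to be closed under direct products. The key input here is \cite[Lemma 5.7]{HRYY25}, which provides the equality
$$(\mathcal{E}\text{-}{\rm dw}\widetilde{\mathcal{GP}})^{\perp} = {}^{\perp}(\mathcal{E}\text{-}{\rm dw}\widetilde{\mathcal{GI}}).$$
A right-Ext-orthogonal class is automatically closed under direct products, and a left-Ext-orthogonal class is automatically closed under direct sums, so this single identity simultaneously supplies both required closure properties for the one common class, and the hypotheses of both corollaries are verified.

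With all hypotheses in place, Corollary \ref{cor:recollement} applied with $\mathcal{X} = \mathcal{GP}$ produces the first recollement, and Corollary \ref{cor:recollement2} applied with $\mathcal{Y} = \mathcal{GI}$ produces the second. I do not anticipate any further obstacle: the genuine homological content has already been absorbed into \cite[Lemma 5.7]{HRYY25}, and once that orthogonality identity is granted, both recollements drop out mechanically from the model-categorical machinery developed in Section \ref{section3}. The coincidence ${\bf D}_\mathcal{GP}(\mathcal{A})={\bf D}_\mathcal{GI}(\mathcal{A})$, that is, the Gorenstein derived category of Gao and Zhang (cf.\ Remark \ref{rmk:relativederivedcategory}), is just the general identification of the two relative derived categories attached to a balanced pair, so no additional argument is needed to match the notation in the two displayed diagrams.
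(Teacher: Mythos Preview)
Your proposal is correct and follows essentially the same route as the paper: invoke \cite[Lemma 5.6 and Lemma 5.7]{HRYY25} to get that $(\mathcal{GP},\mathcal{GI})$ is an admissible balanced pair with $(\mathcal{E}\text{-}{\rm dw}\widetilde{\mathcal{GP}})^{\perp} = {}^{\perp}(\mathcal{E}\text{-}{\rm dw}\widetilde{\mathcal{GI}})$, observe that this common class is therefore closed under both direct sums and direct products, and then apply Corollaries \ref{cor:recollement} and \ref{cor:recollement2}. The paper records exactly this argument in the paragraph preceding the corollary (and in the introduction), so there is nothing to add.
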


It is worth to note that ${\bf D}_\mathcal{GP}(\mathcal{A})$ coincides wtih ${\bf D}_\mathcal{GI}(\mathcal{A})$, which is exactly the Gorenstein derived category, see e.g. \cite{GZ10}.

\vskip 10pt

\noindent {\bf Acknowledgements.}\quad J.S. Hu is supported by the National Natural Science Foundation of China (Grant Nos. 12571035, 12171206) and Jiangsu 333 Project.
W. Ren is supported by the National Natural Science Foundation of China (No. 11871125), and Natural Science Foundation of Chongqing, China (No. cstc2018jcyjAX0541). 
X.Y. Yang is supported by the National Natural Science Foundation of China (Grant No. 12571035).  H.Y. You is supported by Zhejiang Provincial Natural Science Foundation of China (No. LQ23A010004) and the National Natural Science Foundation of China (Grant No. 12401043).

\bibliography{}

\begin{thebibliography}{9999}


\bibitem{BBD82} A.A. Beilinson, J. Bernstein and P. Deligne, Faisceaux pervers, Ast\'{e}risque, vol. 100, Soc. Math. France, 1982.

\bibitem{Bec14} H. Becker, Models for singularity categories, \emph{Adv. Math.} {\bf 254} (2014) 187-232.

\bibitem{BR07}  A. Beligiannis and I. Reiten, Homological and homotopical aspects of torsion theories, \emph{Mem. Am. Math. Soc.} {\bf 883}, 2007.

\bibitem{Buh10}  T. B\"{u}hler, Exact Categories, \emph{Expo. Math.} {\bf 28}(1) (2010) 1-69.


\bibitem{Chen10} X.-W. Chen, Homotopy equivalences induced by balanced pairs, \emph{J. Algebra} {\bf 324} (2010) 2718-2731.

\bibitem{EJ95}  E.E. Enochs and O.M.G. Jenda, \emph{Gorenstein injective and projective modules}, \emph{Math. Z.} {\bf 220} (1995) 611-633.

\bibitem{EJ00}  E.E. Enochs and O.M.G. Jenda, \emph{Relative Homological Algebra}, De Gruyter Expositions in Mathematics no. 30, New York: Walter De Gruyter, 2000.


\bibitem{EPZ20} S. Estrada, M.A. P\'{e}rez and H. Zhu, Balanced pairs, cotorsion triplets and quiver representations, \emph{Proc. Edinburgh Math. Soc.} {\bf 63}(1) (2020) 67-90.

\bibitem{GC} D. Georgios, C. Psaroudakis, Lifting recollements of abelian categories and model structures, \emph{J. Algebra} {\bf 623} (2023) 395-446.

\bibitem{Gil04} J. Gillespie,  The flat model structure on Ch($R$), \emph{Trans. Amer. Math. Soc.} {\bf 356} (2004) 3369-3390.

\bibitem{Gil08} J. Gillespie, Cotorsion pairs and degreewise homological model structures, \emph{Homol. Homotopy Appl.} {\bf 10} (2008) 283-304.

\bibitem{Gil11} J. Gillespie, Model structures on exact categories, \emph{J. Pure Appl. Algebra} {\bf 215} (2011) 2892-2902.

\bibitem{Gil16-2} J. Gillespie, Hereditary abelian model categories, \emph{B. London Math. Soc.} {\bf 48}(6) (2016) 895-922.

\bibitem{Gil25} J. Gillespie, Abelian model category theory, Cambridge Studies in Adv. Math. 215, Cambridge Univ. Press,
2025.

\bibitem{GZ10} N. Gao, P. Zhang, Gorenstein derived categories, \emph{J. Algebra} {\bf 323} (2010) 2041-2057.

\bibitem{GLZ24} N. Gao, X. Lu, P. Zhang, Chains of model structures arising from modules of finite Gorenstein dimension, preprint, available at arXiv:2403.05232v4 [math.RT].

\bibitem{Hov99}   M. Hovey, \emph{Model Categories}, Mathematical Surveys and Monographs vol. 63, American Mathematical Society, 1999.

\bibitem{Hov02}  M. Hovey, Cotorsion pairs, model category structures and representation theory, \emph{Math. Z.} {\bf 241} (2002) 553-592.


\bibitem{HRYY25} J. Hu, W. Ren, X. Yang, H. You, Quillen equivalence for chain homotopy categories induced by balanced pairs, arXiv:2503.01188v2 [math.RT].

\bibitem{HZZ25} J. Hu, H. Zhu, R. Zhu, Gluing and lifting exact model structures for the recollement of exact categories, \emph{Bull. Malays. Math. Sci. Soc.}  {\bf 48} (2025) Paper No. 173.


\bibitem{H05} H. Krause, The stable derived category of a Noetherian scheme, \emph{Compos. Math.} {\bf 141} (2005) 1128-1162.

\bibitem{LH15}  H. Li and Z. Huang, Relative singularity categories, \emph{J. Pure Appl. Algebra} {\bf 219} (2015) 4090-4104.


\bibitem{Mur07} D. Murfet, The mock homotopy category of projectives and Grothendieck duality, PhD Thesis, (2007), avaliable from http://therisingsea.org/notes/thesis.pdf

\bibitem{Nee90} A. Neeman, The derived category of an exact category, \emph{J. Algebra} {\bf 138} (1990) 388-394.


\bibitem{Nee08} A. Neeman, The homotopy category of flat modules, and Grothendieck duality, \emph{Invent. Math}. {\bf 174} (2008) 255-308.

\bibitem{Qui67}  D.G. Quillen, \emph{Homotopical Algebra}, Lecture Notes in Mathematics no. 43, Springer-Verlag, 1967.

\bibitem{Qui73} D.G. Quillen, \emph{Higher algebraic $K$-theory I}, Lectures Notes in Math., Vol.341, Berlin: Springer, 1973.


\bibitem{S14} J. \v{S}\v{t}ov\'{\i}\v{c}ek, On purity and applications to coderived and singularity categories, arXiv:1412.1615v1 [math.CT].

\bibitem{S14-2} J. \v{S}\v{t}ov\'{\i}\v{c}ek, Exact model categories, approximation theory, and cohomology of quasi-coherent sheaves, in: Advances in Representation Theory of Algebras, EMS Series of Congress Reports, European Math. Soc.
Publishing House, (2014) 297-367.

\bibitem{SWSW08} S. Sather-Wagstaff, T. Sharif and D. White, Stability of Gorenstein categories, \emph{J. London Math. Soc.} {\bf 77}(2) (2008) 481-502.


\bibitem{WE24} J. Wang and S. Estrada, Homotopy equivalences and Grothendieck duality over rings with finite Gorenstein weak global dimension, \emph{J. Algebra} {\bf 678} (2025) 769-808.



\end{thebibliography}

\vskip 10pt

\vspace{4mm}
\noindent\textbf{Jiangsheng Hu}\\
School of Mathematics, Hangzhou Normal University, Hangzhou 311121, P. R. China.\\
Email: \textsf{hujs@hznu.edu.cn}\\[1mm]
\textbf{Wei Ren}\\
School of Mathematical Sciences, Chongqing Normal University, Chongqing 401331, P. R. China\\
Email: \textsf{wren@cqnu.edu.cn}\\[1mm]
\textbf{Xiaoyan Yang}\\
Zhejiang University of Science and Technology, Hangzhou 310023, P. R. China.\\
Email: \textsf{yangxy@zust.edu.cn}\\[1mm]
\textbf{Hanyang You}\\
School of Mathematics, Hangzhou Normal University, Hangzhou 311121, P. R. China.\\
E-mail: \textsf{youhanyang@hznu.edu.cn}\\[1mm]

\end{document}